\newtheorem{theorem}{Theorem}[section]
\newtheorem{lemma}[theorem]{Lemma}
\theoremstyle{definition}
\newtheorem{remark}[theorem]{Remark}
\newcommand{\vp}{\varphi}
\newcommand{\bG}{{\mathbf G}}
\newcommand{\bH}{{\mathbf H}}
\newcommand{\bQ}{{\mathbf Q}}
\newcommand{\bR}{{\mathbf R}}
\newcommand{\bZ}{{\mathbf Z}}
\newcommand{\sA}{{\mathscr A}}
\newcommand{\sB}{{\mathscr B}}
\newcommand{\sG}{{\mathscr G}}
\newcommand{\sH}{{\mathscr H}}
\newcommand{\sO}{{\mathscr O}}
\newcommand{\sT}{{\mathscr T}}
\newcommand{\sX}{{\mathscr X}}
\DeclareMathOperator{\Ad}{Ad}
\DeclareMathOperator{\GL}{GL}
\DeclareMathOperator{\Hom}{Hom}
\DeclareMathOperator{\id}{id}
\DeclareMathOperator{\Spec}{Spec}
\newcommand{\wtil}[1]{\wtil}
\title{Morphisms of character varieties}
\author{Sean Cotner}
\begin{document}
\bibliographystyle{halpha-abbrv}
\maketitle

\begin{abstract}
Let $k$ be a field, let $H \subset G$ be (possibly disconnected) reductive groups over $k$, and let $\Gamma$ be a finitely generated group. Vinberg and Martin have shown that the induced morphism of character varieties
\[
\underline{\Hom}_{k\textrm{-gp}}(\Gamma, H)/\!/H \to \underline{\Hom}_{k\textrm{-gp}}(\Gamma, G)/\!/G
\]
is finite. In this note, we generalize this result (with a significantly different proof) by replacing $k$ with an arbitrary locally noetherian scheme, answering a question of Dat. Along the way, we use Bruhat--Tits theory to establish a few apparently new results about integral models of reductive groups over discrete valuation rings.
\end{abstract}

\section{Introduction}

Let $S$ be a locally noetherian scheme, and let $H$ and $G$ be smooth $S$-affine $S$-group schemes with reductive fibers and finite \'etale component groups. Let $f: H \to G$ be an $S$-homomorphism and let $h: \Gamma' \to \Gamma$ be a homomorphism of finitely generated groups. If $\underline{\Hom}_{S\textrm{-gp}}(\Gamma, G)$ denotes the scheme of homomorphisms from $\Gamma$ to $G$, then there is a natural $S$-morphism
\[
F: \underline{\Hom}_{S\textrm{-gp}}(\Gamma, H)/\!/H \to \underline{\Hom}_{S\textrm{-gp}}(\Gamma', G)/\!/G
\]
between the GIT quotients. The goal of this note is to prove the following theorem, answering a question of J.-F.\ Dat \cite[Conjecture 5.16]{Dat-IHES}.

\begin{theorem}\label{theorem:main-theorem}
If $f$ is a finite morphism and $h(\Gamma')$ is of finite index in $\Gamma$, then $F$ is finite.
\end{theorem}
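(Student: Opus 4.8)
The plan is to derive the finiteness of $F$ from the valuative criterion for properness. Write $X = \underline{\Hom}_{S\textrm{-gp}}(\Gamma, H)/\!/H$ and $Y = \underline{\Hom}_{S\textrm{-gp}}(\Gamma', G)/\!/G$. Since $\underline{\Hom}_{S\textrm{-gp}}(\Gamma, H)$ is a closed subscheme of a finite power of the $S$-affine group scheme $H$, and the GIT quotient of an $S$-affine scheme by a geometrically reductive $S$-group scheme --- such as $H$ and $G$, which have reductive fibers and finite \'etale component groups --- is again $S$-affine, both $X$ and $Y$ are affine over $S$; as $Y$ is moreover separated over $S$, the morphism $F$ is affine. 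It is also separated and of finite type (rings of invariants of finitely generated algebras under geometrically reductive group schemes over a noetherian base remain finitely generated). An affine proper morphism is finite, so it is enough to show that $F$ is universally closed, and, $F$ being of finite type between locally noetherian schemes, this may be checked with the valuative criterion --- which is also where the argument will diverge most from that of Vinberg and Martin, who worked with rings of invariants directly.

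So I would fix a discrete valuation ring $R$, with fraction field $K$ and residue field $k$, and a commutative square
\[
\begin{tikzcd}
\Spec K \ar[r] \ar[d] & X \ar[d, "F"]\\
\Spec R \ar[r, "y"] & Y,
\end{tikzcd}
\]
and produce a lift $\Spec R \to X$ (automatically unique, as $F$ is separated). Existence of a lift may be tested after a faithfully flat extension of discrete valuation rings, the unique lift over the extension then descending, so one may assume $R$ complete with algebraically closed residue field. Enlarging $K$ by a finite extension and replacing $R$ by a dominating discrete valuation ring, and using that $G$-complete reducibility is insensitive to field extension, one may further assume that the top arrow is the class $[\rho]$ of an honest completely reducible representation $\rho \colon \Gamma \to H(K)$. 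Writing $\tau = f \circ (\rho \circ h) \colon \Gamma' \to G(K)$, the hypothesis becomes that $[\tau] \in Y(K)$ extends to $y \in Y(R)$, and the goal is to show that $[\rho] \in X(K)$ extends over $R$.

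The first substantive step is a boundedness propagation. The integral-model results over discrete valuation rings should enter here in the form that an $R$-point of the affine GIT quotient $Y$ is represented, after a suitable ramified base change, by a representation $\Gamma' \to \mathcal{G}(R)$ into a parahoric integral model $\mathcal{G}$ of $G_K$; granting this, the hypothesis forces $\tau$, up to $G(K)$-conjugacy, to have bounded image. As $f$ is a finite morphism, preimages of bounded subsets of $G(K)$ are bounded, so $(\rho \circ h)(\Gamma')$ is bounded in $H(K)$; and as $h(\Gamma')$ has finite index in $\Gamma$, the image $\rho(\Gamma)$ --- a finite union of translates of $(\rho \circ h)(\Gamma')$ --- is bounded as well. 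Then Bruhat--Tits theory, applied to the possibly disconnected reductive groups $H$ and $G$ as in the integral-model results, places $\rho(\Gamma)$ inside $\mathcal{H}(R)$ for a parahoric integral model $\mathcal{H}$ of $H_K$ that can be chosen so that $f$ extends to a finite $R$-morphism $\mathcal{H} \to \mathcal{G}$ agreeing with $f$ over $K$.

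What remains is to show that the class of a representation with image in $\mathcal{H}(R)$ extends to an $R$-point of $X$; equivalently, that every $R$-integral $H$-invariant function on $\underline{\Hom}_{S\textrm{-gp}}(\Gamma, H)$ takes an $R$-integral value on $\rho$. For $H = \GL_n$ this is clear --- the ring of invariants is generated over $R$ by trace and determinant functions, which are integral on any element generating a bounded subgroup --- and the general case should reduce to it via a closed embedding $H \hookrightarrow \GL_n$ and functoriality of Bruhat--Tits buildings, or follow directly from the integral-model results. I expect the main obstacle to lie in the behavior over the special point: one must match the closed $H(k)$-orbit in $\underline{\Hom}_{S\textrm{-gp}}(\Gamma, H_k)$ lying over the image $y_0 \in Y(k)$ of the closed point with the special fiber of the chosen parahoric model, and it is here that the finiteness theorem of Vinberg and Martin over the residue field $k$ will be invoked (the one place it is genuinely needed), together with the structure of reductions of parahoric subgroups. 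It may also be cleaner to organize the proof by factoring $F$ through $\underline{\Hom}_{S\textrm{-gp}}(\Gamma, G)/\!/G$ as the pushforward along the finite $f$ followed by the restriction along the finite-index $h$ --- the latter reduced, by a closed-immersion argument, to the inclusion of an honest finite-index subgroup --- and treating the two cases separately.
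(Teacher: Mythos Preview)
Your overall strategy --- reduce to the valuative criterion, lift the generic point to a closed-orbit representation $\rho\colon\Gamma\to H(K)$, establish boundedness of $\rho(\Gamma)$, and then conjugate into integral points --- is the same as the paper's. But there is a genuine gap in the boundedness step, and the endgame is more speculative than it needs to be.

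You claim that lifting $y\in Y(R)$ to some $\varphi\colon\Gamma'\to G(R)$ ``forces $\tau$, up to $G(K)$-conjugacy, to have bounded image,'' where $\tau=f\circ\rho\circ h$. This does not follow. Equality $[\tau]=[\varphi]$ in the GIT quotient $Y(K)$ only says that the $G$-orbit \emph{closures} of $\tau$ and $\varphi$ meet; it does not make them conjugate. You chose $\rho$ to have closed $H$-orbit, but there is no reason for $\tau$ to have closed $G$-orbit (indeed, proving that closed $H$-orbits map to closed $G$-orbits is essentially the theorem itself). The paper fills this gap in two steps you do not mention. First, by Richardson's theorem, the closed $H$-orbit of $\rho$ forces the Zariski closure $H_1=\overline{\rho(\Gamma)}$ to be reductive; finiteness of $f$ then makes $G_1=\overline{\tau(\Gamma')}$ reductive as well. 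Second (the paper's Lemma~4.4), for any faithful $G\hookrightarrow\GL(V)$ the eigenvalues of $\tau(\gamma)$ and $\varphi(\gamma)$ on $V$ coincide, being invariant functions constant on orbit closures, so they are integral since $\varphi$ is $R$-valued; a standard fact from Bruhat--Tits theory (a subgroup of a \emph{connected reductive} group whose elements all have integral eigenvalues is bounded) then yields boundedness of $\tau(\Gamma')$. The reductivity of $G_1$ is essential here, and it is exactly what Richardson provides.

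For the final step, once $\rho(\Gamma)$ is bounded in $H(K)$ the paper shows directly (its Lemma~4.2) that after a finite extension the bounded subgroup sits inside a \emph{hyperspecial} parahoric, which is then $H^0(K')$-conjugate to $H(A')$; this already produces the desired $A'$-point of $X$. An arbitrary parahoric model $\mathcal H$ would not suffice, and there is no need to analyze generators of the invariant ring, to match closed orbits over the residue field, or to invoke Vinberg--Martin there --- the paper's proof never uses the Vinberg--Martin theorem at all.
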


Theorem~\ref{theorem:main-theorem} has been proved in various special cases in the literature.
\begin{enumerate}
    \item If $S$ is the spectrum of a field, $\Gamma = \Gamma'$, and $h = \id_{\Gamma}$ then Theorem~\ref{theorem:main-theorem} is due to Vinberg \cite{Vinberg} in characteristic 0 and Martin \cite{martin03} in positive characteristic.
    \item If $\Gamma = \Gamma' = \bZ$, $h = \id_{\bZ}$, and $H$ and $G$ are reductive, then Theorem~\ref{theorem:main-theorem} follows from the Chevalley--Steinberg theorem (for more general $H$ and $G$, see \cite[Lemma 2.2]{dhkm_finiteness}).
    \item If $S = \Spec W(k)$ for an algebraically closed field $k$ of characteristic $p > 0$, $\Gamma$ is a finite group of order prime to $p$, and $H$ and $G$ are semisimple, then \cite[Appendix A]{Griess-Ryba} shows that the source and target of $F$ are finite \'etale, and in particular Theorem~\ref{theorem:main-theorem} holds.
    \item If $p$ is a prime number, $S = \Spec \overline{\bZ}[\frac{1}{p}]$, the function $h$ is injective, and $\Gamma = W_F^0/P$ for a finite extension $F/\bQ_p$ (where $W_F^0$ is the ``discretized Weil group" introduced in \cite{dhkm_moduli} and $P$ is an open subgroup of wild inertia which is normal in $W_F$), then Theorem~\ref{theorem:main-theorem} is proved with some assumptions on $G$ in \cite[Corollary 2.4, Corollary 2.5]{dhkm_finiteness}.
\end{enumerate}

\begin{remark}\label{remark:warning}
    The statement of Theorem~\ref{theorem:main-theorem} is slightly strange: when $\Gamma = \Gamma'$ is a free group on $N$ letters, we have $\underline{\Hom}_{S\textrm{-gp}}(\Gamma, H) \cong H^N$ and the induced morphism of quotient stacks
    \[
    [H^N/H] \to [G^N/G]
    \]
    is almost never universally closed. Indeed, if $f$ is monic then pulling back via the map $S \to [G^N/G]$ corresponding to $(1, \dots, 1)$ gives $G/H \to S$, which is universally closed if and only if $G/H$ is finite.
\end{remark}

We will prove Theorem~\ref{theorem:main-theorem} by verifying the valuative criterion of properness. Let us briefly illustrate the strategy, ignoring a few subtleties; we remark that it is quite different from the strategies of \cite{Vinberg} and \cite{martin03} even over a field. Suppose given a commutative diagram
\[
\begin{tikzcd}
    \Spec(K) \arrow[r, "y"] \arrow[d]
    &\underline{\Hom}_{S\textrm{-gp}}(\Gamma, H)/\!/H \arrow[d, "F"] \\
    \Spec(A) \arrow[r, "x"]
    &\underline{\Hom}_{S\textrm{-gp}}(\Gamma', G)/\!/G
\end{tikzcd}
\]
in which $A$ is a DVR with fraction field $K$. After passing to a local extension of $A$, which we may assume to be complete, Lemma~\ref{lemma:git-quotient-points} shows that $x$ arises from a homomorphism $\vp: \Gamma' \to G(A)$ and $y$ arises from a homomorphism $\psi: \Gamma \to H(K)$ with closed $H_K$-orbit such that $\vp$ and $f \circ \psi \circ h$ are $G(K)$-conjugate. We must show that after extending $A$ further, $\psi(\Gamma)$ is $H(K)$-conjugate to a subgroup of $H(A)$. Using the $G(K)$-conjugacy of $f \circ \psi \circ h$ and $\vp$, one shows that $\psi(\Gamma)$ is a bounded subgroup of $H(K)$, and using the closedness of the orbit of $\psi$ it follows from \cite{Richardson} that the Zariski closure $H_1$ of $\psi(\Gamma)$ in $H_K$ is reductive (but possibly disconnected). At this point, the key input is the following theorem (applied to $B = \psi(\Gamma)$), which collects the results of Sections~\ref{section:extending-groups} and \ref{section:extending-homs}.

\begin{theorem}\label{theorem:main-theorem-2}
    Let $A$ be an excellent henselian DVR with fraction field $K$.
    \begin{enumerate}
        \item\label{item:main-2-1} (Lemmas~\ref{lemma:extending-reductive-groups}, \ref{lemma:extending-reductive-groups-2}) Let $H_1$ be a reductive $K$-group, and let $B \subset H_1(K)$ be a bounded subgroup. There exists a finite local extension of DVRs $A \subset A'$ with fraction field $K'$ and a smooth affine model $\sH_1$ of $(H_1)_{K'}$ over $A'$ such that $\sH_1^0$ is a reductive group scheme, $\sH_1/\sH_1^0$ is finite \'etale, and $B \subset \sH_1(A')$.
        \item\label{item:main-2-2} (Lemma~\ref{lemma:griess-ryba}) Let $\sH_1$ and $H$ be smooth affine $A$-group schemes such that $H^0$ is reductive and $H/H^0$ is finite \'etale, and let $f_1: (\sH_1)_K \to H_K$ be a $K$-homomorphism. There exists a finite local extension of DVRs $A \subset A'$ with fraction field $K'$ and $h \in H^0(K')$ such that $\Ad(h) \circ (f_1)_{K'}$ extends to an $A'$-homomorphism $f: (\sH_1)_{A'} \to H_{A'}$.
    \end{enumerate}
\end{theorem}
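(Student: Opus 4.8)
The substantive content lies in the three cited lemmas, which I would organize as follows, using Bruhat--Tits theory and ramified base change as the main tools.

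\emph{Part \ref{item:main-2-1}.} Since $A$ is excellent and henselian, the integral closure of $A$ in any finite separable extension of $K$ is again a DVR, finite over $A$, so I am free to replace $K$ by any such extension. First choose $K'$ so that $H_1^0$ is split and $\pi := H_1/H_1^0$ is constant, and set $B^0 = B \cap H_1(K)$, a bounded subgroup of finite index in $B$. By the Bruhat--Tits fixed-point theorem $B^0$ fixes a point of the enlarged building of $H_1^0$; the crucial geometric input is then that, after a further \emph{totally ramified} extension of sufficiently divisible degree, every parahoric of a split reductive group becomes contained in a hyperspecial parahoric, so that $B^0 \subset \mathscr H_1^0(A')$ for a reductive model $\mathscr H_1^0$ of $(H_1^0)_{K'}$. (For a ramified torus this is merely the observation that after splitting it, a bounded subgroup lands in the unit group of a split torus model; the general case comes down to controlling non-hyperspecial parahorics under ramified base change, and I expect this Bruhat--Tits statement to be the main obstacle.) To handle the disconnected part, enlarge $K'$ so that $H_1(K') \twoheadrightarrow \pi$ and pick lifts $g_\sigma \in H_1(K')$ of $\sigma \in \pi$: the outer part of $\operatorname{Ad}(g_\sigma) \in \operatorname{Aut}((H_1^0)_{K'}) = \operatorname{Aut}(\mathscr H_1^0)_{K'}$ extends automatically, and its inner part becomes $A'$-integral after modifying $g_\sigma$ by an element of $H_1^0(K')$ (the relevant torsor of modifications is trivial, having the point $\operatorname{Ad}(g_\sigma)$). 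Gluing $|\pi|$ copies of $\mathscr H_1^0$ along these automorphisms requires the $2$-cocycle $c_{\sigma,\tau} = g_\sigma g_\tau g_{\sigma\tau}^{-1}$ to be $A'$-integral; it is integral modulo $Z(H_1^0)$, and its class in the finite group $H^2(\pi, Z(H_1^0)(K')/(\text{bounded part}))$ is killed after one more ramified extension, producing the desired $\mathscr H_1$.

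\emph{Part \ref{item:main-2-2}.} The key reduction is a characterization of integrality of functions: for a smooth affine group scheme $\mathscr G$ over a henselian DVR $\mathscr O$ with fraction field $L$, a function $F \in \mathcal O(\mathscr G_L)$ lies in $\mathcal O(\mathscr G)$ exactly when $F(g) \in \mathscr O_{L''}$ for every finite extension $L''/L$ and every $g \in \mathscr G(\mathscr O_{L''})$; necessity is clear, and for sufficiency a function with a genuine denominator has a nonzero reduction on the reduced scheme $\mathscr G_k$, which by smoothness and henselianity is witnessed at some integral point over a finite extension. Applying this to $\mathscr G = \mathscr H_1$, the claim that $\operatorname{Ad}(h) \circ (f_1)_{K'}$ extends over $A'$ becomes the assertion that $\operatorname{Ad}(h)$ carries $f_1(\mathscr H_1(\mathscr O_{K''}))$ into $H(\mathscr O_{K''})$ for every finite $K''/K'$. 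Since $f_1$ is a morphism of affine schemes, $f_1(\mathscr H_1(\mathscr O_{K''}))$ is a bounded subgroup of $H(K'')$; its identity component fixes a point $y_1$ of the enlarged building of $H^0$, while the given reductive model $H^0$ corresponds to a hyperspecial point $y_0$ of the same building. After a finite extension making $H^0$ split and merging the relevant $H^0(K')$-orbits of building points, there is $h \in H^0(K')$ with $\operatorname{Ad}(h) \cdot y_1 = y_0$; functoriality of the Bruhat--Tits construction, together with the disconnected bookkeeping from part \ref{item:main-2-1} applied on the target side, then shows that this $h$ moves $f_1(\mathscr H_1(\mathscr O_{K''}))$ inside $H(\mathscr O_{K''})$ for all $K''$, which is what we needed. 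Here the hard part is again the building-theoretic step of matching $y_1$ with $y_0$ after controlled ramified base change.
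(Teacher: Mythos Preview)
Your overall strategy---Bruhat--Tits buildings, ramified base change to force hyperspeciality, conjugacy of hyperspecial points, and an integrality criterion via values on integral points---matches the paper's. But there are two genuine gaps, both stemming from how you handle disconnectedness.

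\textbf{Part (1): you never get the full $B$ into $\sH_1(A')$.} You arrange $B^0 \subset \sH_1^0(A')$, then build $\sH_1$ by choosing lifts $g_\sigma$ and modifying them (first to make $\Ad(g_\sigma)$ integral, then again to kill the cocycle class). After these modifications the $g_\sigma$ have nothing to do with $B$: an element $b \in B$ mapping to $\sigma$ is $g_\sigma h$ with $h \in H_1^0(K')$, and there is no reason for $h$ to lie in $\sH_1^0(A')$. The paper sidesteps this by separating the two problems. First (Lemma~\ref{lemma:extending-reductive-groups}) it constructs \emph{some} geometrically reductive model, with no reference to $B$, via a clean pushout: take $N = N_G(\sB_0,\sT_0)$, use Hochschild cohomology to factor the extension $1 \to \sT_0 \to N \to G/G^0 \to 1$ through an $n$-torsion extension, extend that finite flat group scheme over $A$ (Lemma~\ref{lemma:extending-finite-group-schemes}), and push out. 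Then (Lemma~\ref{lemma:extending-reductive-groups-2}) it lets the full disconnected $B$ act on $\sB(G^0)$: the finite orbit $B\cdot x_0$ has a convex hull whose \emph{barycenter} is fixed by all of $B$, not just $B^0$. After Larsen's ramified trick this barycenter becomes hyperspecial, and its stabilizer in $G(K')^1$ is $\sH_1(A')$ for a conjugate of the model already built. The barycenter step is exactly what your sketch is missing.

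\textbf{Part (2): your extension criterion forces you to control infinitely many $K''$ at once.} Your criterion ``$F \in \cO(\sG)$ iff $F$ is integral on $\sG(\sO_{L''})$ for all finite $L''/L$'' is correct, but applying it means you must show the \emph{same} $h$ satisfies $\Ad(h)\bigl(f_1(\sH_1(\sO_{K''}))\bigr) \subset H(\sO_{K''})$ for every $K''$. Your point $y_1$ is chosen so that $f_1(\sH_1(A'))$ (or its identity part) fixes it, but $f_1(\sH_1(\sO_{K''}))$ is strictly larger and need not fix the image of $y_1$ in $\sB(H^0_{K''})$; ``functoriality of the Bruhat--Tits construction'' does not say this, since $\sH_1$ is only assumed smooth affine. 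The paper's Lemma~\ref{lemma:kaletha-prasad} gives a sharper criterion: for $X$ smooth over a strictly henselian $A$ and $X'$ affine over $A'$, a $K'$-morphism $X_{K'} \to X'_{K'}$ extends over $A'$ as soon as it sends $X(A)$ into $X'(A')$---one base ring suffices, via Weil restriction and schematic density of $X(A)$. With this in hand Part (2) is immediate: $f_1(\sH_1(A))$ is a single bounded subgroup of $H(K)$, Part (1) gives a model $H'$ with $f_1(\sH_1(A)) \subset H'(A')$ and an $h \in H^0(K')$ with $hH'(A')h^{-1} = H(A')$, and Lemma~\ref{lemma:kaletha-prasad} finishes.
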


In the case that $S = \Spec W(k)$ for an algebraically closed field $k$ of characteristic $p > 0$, $\sH_1$ is constant of order prime to $p$, and $H$ is semisimple (with connected fibers), Theorem~\ref{theorem:main-theorem-2}(\ref{item:main-2-2}) is proved in \cite[Lemma A.8]{Griess-Ryba}.

The proof of Theorem~\ref{theorem:main-theorem-2}(\ref{item:main-2-1}) proceeds by first constructing (after a finite local extension of $A$) a \textit{single} integral model of $H_1$ with good properties. The group of $A$-points of any such model can be realized as the stabilizer of a hyperspecial point in the Bruhat--Tits building $\sB(H_1^0)$. Next, we show that for any bounded subgroup $B$ of $H_1(K)$, there is a finite extension $K'/K$ such that $B$ stabilizes a hyperspecial point of $\sB((H_1^0)_{K'})$, so we obtain (\ref{item:main-2-1}) from the conjugacy of hyperspecial points. Item (\ref{item:main-2-2}) follows from (\ref{item:main-2-1}) applied to $B = f_1(H(A))$, the conjugacy of hyperspecial points, and a simple lemma (Lemma~\ref{lemma:kaletha-prasad}).

\subsection{Acknowledgements}

I would like to thank Ben Church for several long and interesting conversations which helped to crystallize the ideas of this paper. I thank Brian Conrad and Ravi Vakil for helpful conversations which led to corrections and simplifications. I thank Jarod Alper, Vytautas Pa\v{s}k\={u}nas, and Julian Quast for asking a useful question, and I thank the anonymous referee for suggesting a simplification of the proof of Lemma~\ref{lemma:extending-reductive-groups}.

\section{Preliminaries}\label{section:integral-git}

We recall first the \textit{existence part of the (noetherian) valuative criterion of properness} for morphisms of stacks. For simplicity, let $f: \sX \to Y$ be a finite type morphism of algebraic stacks such that $Y$ is a locally noetherian scheme. We say that $f$ satisfies the existence part of the valuative criterion of properness if, for every DVR $A$ with fraction field $K$ and every solid commutative diagram
\[
\begin{tikzcd}
    \Spec K' \arrow[r, dotted] \arrow[d, dotted]
    &\Spec K \arrow[r] \arrow[d]
    &\sX \arrow[d] \\
    \Spec A' \arrow[r, dotted] \arrow[rru, dotted]
    &\Spec A \arrow[r]
    &Y
\end{tikzcd}
\]
there exists a local extension $A \subset A'$ of DVRs with fraction field $K'$ such that the dotted diagram can be filled in to become commutative. (Note that this differs slightly from the definition in \cite[\href{https://stacks.math.columbia.edu/tag/0CLA}{Tag 0CLA}]{stacks-project}, in which $A$ and $A'$ are only required to be valuation rings; we have also simplified the definition by assuming that $Y$ has no stabilizers.)

The relevance of this definition for our purposes is that if $f$ is finite type and quasi-separated, then \cite[\href{https://stacks.math.columbia.edu/tag/0H2C}{Tag 0H2C}, \href{https://stacks.math.columbia.edu/tag/0CLX}{Tag 0CLX}]{stacks-project} shows that $f$ satisfies the existence part of the valuative criterion of properness if and only if $f$ is universally closed. (Strictly speaking, \cite[\href{https://stacks.math.columbia.edu/tag/0CLX}{Tag 0CLX}]{stacks-project} is only a statement about the \textit{non-noetherian} valuative criterion, but its proof works mutatis mutandis for the noetherian valuative criterion by using \cite[\href{https://stacks.math.columbia.edu/tag/0H2B}{Tag 0H2B}]{stacks-project} in place of \cite[\href{https://stacks.math.columbia.edu/tag/0CL2}{Tag 0CL2}]{stacks-project}.)

We recall also the notion of geometric reductivity from \cite[Definition 9.1.1]{Alper-adequate}. The precise definition is of no relevance to us, but we recall the following two facts which we will use without comment in what follows.
\begin{enumerate}
    \item \cite[Theorem 9.7.6]{Alper-adequate} If $S$ is a scheme, then a smooth affine\footnote{The published version of \cite[Theorem 9.7.6]{Alper-adequate} omits the word ``affine" in this statement. Without this assumption, the statement is false: indeed, abelian schemes are geometrically reductive. In the 2018 arXiv version of \cite[Theorem 9.7.6]{Alper-adequate}, this is corrected, but there is an additional assumption that $G/G^0$ is separated. This latter assumption is superfluous: all group schemes over a field are separated, so if $G$ is geometrically reductive smooth affine then the theorem shows $G$ has reductive fibers, and thus \cite[Proposition 3.1.3]{Conrad} shows that $G/G^0$ is a separated \'etale $S$-group of finite presentation. I thank Jarod Alper, Vytautas Pa\v{s}k\={u}nas, and Julian Quast for inquiring about this discrepancy.} $S$-group scheme $G$ is geometrically reductive if and only if the relative identity component $G^0$ is a reductive group scheme and the quotient $G/G^0$ is finite \'etale over $S$.
    \item \cite[Theorems 5.3.1, 6.3.3, 9.1.4]{Alper-adequate} If $S$ is a locally noetherian scheme, $G$ is a geometrically reductive $S$-group scheme, and $p: X \to S$ is a finite type $S$-affine $S$-scheme equipped with a $G$-action, then the GIT quotient $X/\!/G \coloneqq \mathrm{Spec}_S(p_*\sO_X)^G$ is of finite type over $S$ and the natural map $[X/G] \to X/\!/G$ from the quotient stack is surjective and universally closed.
\end{enumerate}

\begin{lemma}\label{lemma:adequate-valuative-criterion}
    Let $S$ be a locally noetherian scheme, let $X$ be a finite type $S$-affine $S$-scheme, and let $G$ be a geometrically reductive $S$-group scheme acting on $X$. The natural map $\pi: [X/G] \to X/\!/G$ satisfies the existence part of the valuative criterion of properness.
\end{lemma}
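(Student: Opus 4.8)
The plan is to reduce the lemma to the criterion recalled above, namely that a finite type, quasi-separated morphism from an algebraic stack to a locally noetherian scheme satisfies the existence part of the valuative criterion of properness precisely when it is universally closed. Thus it suffices to verify that $X/\!/G$ is a locally noetherian scheme, that $\pi$ is of finite type and quasi-separated, and that $\pi$ is universally closed. The last of these is immediate: by the recalled facts about geometric reductivity, the natural map $[X/G] \to X/\!/G$ is surjective and universally closed. The first is also immediate: $X/\!/G = \mathrm{Spec}_S(p_*\sO_X)^G$ is $S$-affine, and by the same recalled facts it is of finite type over the locally noetherian scheme $S$, hence is itself a locally noetherian scheme. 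So the only real work is to check that $\pi$ is of finite type and quasi-separated, and this is purely formal; there is no deep obstacle in this lemma, since the one substantive input (universal closedness) is handed to us by the structure theory of geometrically reductive group schemes.

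For finite type: since $G$ is $S$-affine (being geometrically reductive), $X \times_{[X/G]} X \cong G \times_S X$ is of finite type over $S$; together with $X$ being of finite type over $S$ and $X \to [X/G]$ being a smooth surjection, this shows $[X/G]$ is of finite type over $S$. As $X/\!/G \to S$ is affine, hence separated and quasi-compact, the usual cancellation properties for quasi-compact morphisms and for locally-of-finite-type morphisms show that $\pi \colon [X/G] \to X/\!/G$ is of finite type.

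For quasi-separatedness, I would first check that $[X/G]$ has affine diagonal over $S$. Pulling the diagonal $[X/G] \to [X/G] \times_S [X/G]$ back along the smooth cover $X \times_S X \to [X/G] \times_S [X/G]$ produces the closed subscheme $\{(x_1,x_2,g) \in X \times_S X \times_S G : g\cdot x_1 = x_2\}$ of $G \times_S (X \times_S X)$, which is affine over $X \times_S X$ because $G$ is $S$-affine; hence $\Delta_{[X/G]/S}$ is affine, and in particular $[X/G] \to S$ is quasi-separated. Since $X/\!/G \to S$ is separated, the cancellation property "if $g$ is separated and $g \circ f$ is quasi-separated, then $f$ is quasi-separated" shows that $\pi$ is quasi-separated.

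Having established that $\pi$ is finite type, quasi-separated, universally closed, and lands in a locally noetherian scheme, the recalled criterion finishes the proof. As noted, the only point that deserves a moment's attention is the quasi-separatedness of $[X/G]$, which genuinely uses that $G$ is $S$-affine — this is precisely where geometric reductivity, rather than some weaker hypothesis on $G$, enters this part of the argument.
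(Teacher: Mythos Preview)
Your proposal is correct and follows essentially the same approach as the paper: both argue that $\pi$ is universally closed (from the recalled adequate moduli facts), that $[X/G]$ is quasi-separated and of finite type over $S$, and then use cancellation against the affine morphism $X/\!/G \to S$ to conclude $\pi$ is quasi-separated and of finite type, whence the recalled valuative criterion applies. The only difference is that the paper asserts quasi-separatedness of $[X/G]$ in one line from quasi-compactness of $G \to S$, while you spell out the stronger fact that the diagonal is affine; your closing remark slightly overstates matters, since geometric reductivity also enters through universal closedness and finite-type-ness of $X/\!/G$, not only through affineness of $G$.
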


\begin{proof}
    As noted above, $\pi$ is universally closed. Since $G \to S$ is quasi-compact, the quotient stack $[X/G]$ is evidently quasi-separated and of finite type over $S$. By cancellation, it follows that $\pi$ is quasi-separated and of finite type, and thus the result follows from \cite[\href{https://stacks.math.columbia.edu/tag/0CLX}{Tag 0CLX}]{stacks-project} (suitably adapted to the noetherian setting, as indicated above).
\end{proof}

\begin{lemma}\label{lemma:git-quotient-points}
    Let $A$ be a DVR, let $G$ be a geometrically reductive smooth affine $A$-group scheme, and let $X$ be an affine $A$-scheme on which $G$ acts. If $x \in (X/\!/G)(A)$, then there is a local extension of DVRs $A \subset A'$ such that $x_{A'}$ lies in the image of $X(A') \to (X/\!/G)(A')$.
\end{lemma}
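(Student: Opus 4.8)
The plan is to reduce to the case where $A$ is complete (hence excellent and henselian) and then use the surjectivity and universal closedness of the quotient map $\pi \colon [X/G] \to X /\!/ G$, together with a lifting argument across the closed point. First I would observe that we may freely replace $A$ by a local extension: since the statement only asserts the existence of \emph{some} local extension $A \subset A'$ with the desired property, and local extensions compose, it suffices to prove the claim after any preliminary local extension. In particular, passing to the completion $\widehat{A}$ (a faithfully flat local extension of DVRs, with the relevant compatibility of points since $X$ and $X/\!/G$ are of finite type), we may and do assume $A$ is a complete DVR.

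Next I would use geometric reductivity of $G$: by the facts recalled before Lemma~\ref{lemma:adequate-valuative-criterion}, the GIT quotient $X/\!/G$ is of finite type over $A$ and the natural map $\pi \colon [X/G] \to X/\!/G$ is surjective and universally closed. Apply surjectivity over the residue field $\kappa = A/\mathfrak{m}$: the point $x$ induces $x_0 \in (X/\!/G)(\kappa)$, and since $\pi$ is surjective there is a field extension $\kappa'/\kappa$ and a point of $[X/G](\kappa')$ lifting $(x_0)_{\kappa'}$. After enlarging $A$ by a local extension realizing a suitable finite (or separable, then the completion again) residue extension — here one uses that any finite extension of $\kappa$ is the residue field of some local extension of the complete DVR $A$ — we may assume $x_0$ itself lifts to $[X/G](\kappa)$, i.e.\ there is a $\kappa$-point $\xi_0 \in X(\kappa)$ mapping to $x_0$. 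The remaining task is to deform $\xi_0$ to an $A$-point of $X$ lying over $x \in (X/\!/G)(A)$.

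For this deformation step I would apply the existence part of the valuative criterion of properness for $\pi$, which holds by Lemma~\ref{lemma:adequate-valuative-criterion}. Consider the diagram whose bottom arrow is $x \colon \Spec A \to X/\!/G$ and whose top arrow $\Spec K \to [X/G]$ is obtained from the $\kappa$-point $\xi_0$ — but this requires a $K$-point, not a $\kappa$-point, so the cleaner route is: the $\kappa$-point $\xi_0$ of $X$ together with the $A$-point $x$ of $X/\!/G$ determines, via the fiber product defining $[X/G] \times_{X/\!/G} \Spec A$, a $\kappa$-point of the $A$-scheme (stack) $[X/G] \times_{X/\!/G, x} \Spec A$; since $\pi$ is universally closed and of finite type, this base change $\mathscr{Z} \to \Spec A$ is universally closed and of finite type with nonempty special fiber, hence (being universally closed with target a DVR and nonempty closed fiber) admits a point in the generic fiber specializing into the special fiber, and then the valuative criterion for $\mathscr{Z} \to \Spec A$ — equivalently, applying Lemma~\ref{lemma:adequate-valuative-criterion} to the diagram with generic point that generic-fiber point and constant bottom map $x$ — produces, after a further local extension $A \subset A'$, a section $\Spec A' \to \mathscr{Z}$, i.e.\ an $A'$-point of $[X/G]$ mapping to $x_{A'}$. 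Finally, an $A'$-point of the quotient stack $[X/G]$ is by definition a $G$-torsor $T \to \Spec A'$ together with a $G$-equivariant map $T \to X$; since $A'$ is a complete (hence henselian) DVR with finite residue field extension arranged as above — or more robustly, after one more local extension splitting the torsor, using that $G$ is smooth so $H^1(A', G)$ injects into $H^1(K', G)$ and torsors over the generic point can be killed by a finite extension — the torsor $T$ is trivial, so we get an honest $A'$-point $\Spec A' \to X$ mapping to $x_{A'}$ in $(X/\!/G)(A')$, as desired.

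The main obstacle is the last point: an $A'$-point of $[X/G]$ gives only a $G$-\emph{torsor} with equivariant map to $X$, not directly an $A'$-point of $X$, so one must trivialize the torsor after a further local extension. I expect this to be handled by the smoothness of $G$ (so that the torsor is trivial after an étale, hence after a finite local, extension of $A'$, using that $A'$ is henselian once completed) — this is exactly the kind of "extend $A$ further" step the lemma allows, and it is the only genuinely non-formal input beyond the universal closedness and surjectivity of $\pi$ coming from geometric reductivity.
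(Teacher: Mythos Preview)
Your overall architecture matches the paper's: obtain an $A'$-point of $[X/G]$ over $x_{A'}$ using surjectivity and the existence part of the valuative criterion (Lemma~\ref{lemma:adequate-valuative-criterion}), then trivialize the resulting $G$-torsor after a further local extension using smoothness of $G$. The final torsor-splitting paragraph is fine and is exactly what the paper does in one line (``$X \to [X/G]$ is an \'etale $G$-torsor'').

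The detour through the special fiber, however, contains a genuine logical gap. You assert that because $\mathscr{Z} = [X/G] \times_{X/\!/G, x} \Spec A \to \Spec A$ is universally closed and of finite type with nonempty special fiber, it ``admits a point in the generic fiber specializing into the special fiber.'' But universal closedness does not give you points in the generic fiber from points in the special fiber: the closed immersion $\Spec \kappa \hookrightarrow \Spec A$ is universally closed with nonempty special fiber and empty generic fiber. What actually makes the generic fiber of $\mathscr{Z}$ nonempty is that $\pi$ is \emph{surjective}, hence so is $\mathscr{Z} \to \Spec A$. Once you invoke surjectivity over $\Spec K$ rather than over $\Spec \kappa$, the whole special-fiber step becomes superfluous: surjectivity of $\pi$ gives (after a finite extension of $K$) a $K'$-point of $[X/G]$ over $x_{K'}$, and then Lemma~\ref{lemma:adequate-valuative-criterion} extends it to an $A'$-point over $x_{A'}$. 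This is precisely the paper's two-line argument; your step of lifting over $\kappa$, arranging residue extensions, and then trying to lift from $\kappa$ to $A$ is not needed and is where the error crept in.
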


\begin{proof}
    By Lemma~\ref{lemma:adequate-valuative-criterion} and surjectivity, we may pass to a local extension of $A$ to assume that there exists $x_0 \in [X/G](A)$ mapping to $x$. By definition, $X \to [X/G]$ is an \'etale $G$-torsor, so we may pass to a further local extension of $A$ to assume that $x_0$ lifts to $x_1 \in X(A)$.
\end{proof}

\section{Extending reductive groups over DVRs}\label{section:extending-groups}

The goal of this section is to show that, after passing to a quasi-finite local extension of $A$, any (possibly disconnected) reductive $K$-group $G$ admits a geometrically reductive smooth affine model over $A$. This result (Lemma~\ref{lemma:extending-reductive-groups}) will be complemented by Lemma~\ref{lemma:extending-reductive-groups-2} which will show that, \textit{provided a single such model exists}, one can choose a geometrically reductive smooth affine model whose set of $A$-points contains any given bounded subgroup of $G(K)$. 

To deal with issues of disconnectedness, we will perform a pushout construction with certain finite flat (not necessarily \'etale) $A$-group schemes. To this end, we first develop a small amount of theory for extensions of such group schemes. In fact, we will develop a bit more than is necessary for the proof of Theorem~\ref{theorem:main-theorem}.

\begin{lemma}\label{lemma:extending-auts}
    Let $A$ be a DVR with fraction field $K$, and let
    \[
    1 \to M \to E_i \to \Gamma \to 1
    \]
    ($i = 1, 2$) be short exact sequences of finite flat $A$-group schemes such that $M$ is of multiplicative type and $\Gamma$ is \'etale. If $f_1: (E_1)_K \to (E_2)_K$ is a $K$-homomorphism preserving $M_K$, then $f_1$ extends uniquely to an $A$-homomorphism $f: E_1 \to E_2$. If $f_1$ is an isomorphism, then $f$ is also an isomorphism.
\end{lemma}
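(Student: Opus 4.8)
The plan is to prove the existence statement directly; uniqueness and the remaining assertions then follow formally. Since $E_1$ is finite flat over $A$ it is $A$-torsion free, so $\cO(E_1)\hookrightarrow\cO((E_1)_K)$ and likewise for $E_1\times_A E_1$; hence an $A$-morphism $E_1\to E_2$ is determined by its restriction to $(E_1)_K$, and such a morphism is a homomorphism provided its restriction to $(E_1)_K$ is one. This yields uniqueness, the homomorphism claim, and — applying existence also to $f_1^{-1}$ when $f_1$ is an isomorphism, together with uniqueness — the statement that $f$ is then an isomorphism. So it remains to prove existence.

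The substance is one recurring input: \emph{if $g\colon X\to Y$ is a finite morphism of $A$-schemes with $X$ finite flat over $A$, $Y$ finite \'etale over $A$, and $g_K$ an isomorphism, then $g$ is an isomorphism}. Indeed $g$ is surjective, since its image is closed and contains the dense subset $Y_K$; moreover $X$ is Cohen--Macaulay (finite flat over the regular ring $A$) and $Y$ is regular (\'etale over $A$), so by miracle flatness $g$ is finite locally free, and its degree — locally constant, equal to $1$ over the dense open $Y_K$ — is identically $1$, whence $g$ is an isomorphism. This is applied three times. (a) For finite \'etale $A$-group schemes $\Gamma,\Gamma'$, any homomorphism $\phi\colon\Gamma_K\to\Gamma'_K$ extends over $A$: apply the input to the first projection from the scheme-theoretic closure of the graph of $\phi$ in $\Gamma\times_A\Gamma'$ (flat over $A$, being a closure of the generic fibre), which over $K$ is the isomorphism $\Gamma_\phi\xrightarrow{\sim}\Gamma_K$. (b) For finite $A$-group schemes $M,M'$ of multiplicative type, any $M_K\to M'_K$ extends over $A$: apply (a) to the finite \'etale Cartier duals $(M')^\vee,M^\vee$ and dualize back. (c) An extension $1\to M\to E\to\Gamma\to1$ over $A$ (any finite flat $M$) is split over $A$ as soon as $E_K$ is split over $K$: apply the input to $\ov{s}\to\Gamma$, where $\ov{s}\subset E$ is the scheme-theoretic closure of the image of a $K$-splitting and $\ov{s}\to\Gamma$ is finite because $E\to\Gamma$ is an $M$-torsor with $M$ finite over $A$; the inverse isomorphism is a splitting over $A$. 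Since $\Ext^1$ of $\Gamma$ by a fixed multiplicative-type $\Gamma$-module $M$ is an abelian group and base change to $K$ is additive, (c) says $\Ext^1_A\to\Ext^1_K$ is injective.

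To assemble: let $\bar f_1\colon\Gamma_K\to\Gamma_K$ and $f_{1,M}\colon M_K\to M_K$ be induced by $f_1$. By (a) and (b) they extend to $\bar f\colon\Gamma\to\Gamma$ and $f_M\colon M\to M$ over $A$; since on generic fibres $f_1$ intertwines the conjugation action of $\Gamma$ from $E_1$ with that from $E_2$ precomposed with $\bar f_1$, uniqueness shows $f_M$ is equivariant, so we may form the pullback extension $E_2':=\bar f^{*}E_2$ and the pushout extension $E_1':=(f_M)_{*}E_1$, both extensions of $\Gamma$ by $M$ with the same action (and again finite flat over $A$). The universal properties of pullback and pushout factor $f_1$, over $\id_{\Gamma_K}$, as $(E_1)_K\to(E_1')_K\xrightarrow{\beta_K}(E_2')_K\to(E_2)_K$, the outer maps canonical and $\beta_K$ inducing the identity on both $M_K$ and $\Gamma_K$; by the short five lemma $\beta_K$ is an isomorphism, so $[E_1']=[E_2']$ in $\Ext^1_K$, hence in $\Ext^1_A$ by (c). Pick an isomorphism of extensions $\alpha\colon E_1'\xrightarrow{\sim}E_2'$ over $A$; then $\alpha_K$ and $\beta_K$ differ by an automorphism of the extension $(E_2')_K$, i.e.\ by a $1$-cocycle. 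The scheme $Z^1(\Gamma,M)$ of such cocycles is a closed subscheme of the Weil restriction of $M\times_A\Gamma$ along the finite locally free map $\Gamma\to\Spec A$, hence finite over $A$; as $A$ is integrally closed in $K$ we get $Z^1(\Gamma,M)(A)=Z^1(\Gamma,M)(K)$, so the correcting cocycle lifts, and after modifying $\alpha$ accordingly we may assume $\alpha_K=\beta_K$. Then $f\colon E_1\to E_1'\xrightarrow{\alpha}E_2'\to E_2$ is an $A$-homomorphism restricting to $f_1$.

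The main obstacle is the recurring input. For a general finite flat $A$-group scheme the closure of a subgroup of the generic fibre can be ``too small'' — $\mu_p$ and the constant group $\bZ/p$ over $\bZ_p$ already share a generic fibre — so the hypotheses that $\Gamma$ is \'etale (hence regular) and that the relevant ambient group scheme is finite flat (hence Cohen--Macaulay) are precisely what make miracle flatness applicable and force the degree to be one. Everything else — the pushout/pullback, the five lemma, and the cocycle correction — is bookkeeping to present a general $f_1$ in the shape required by this input.
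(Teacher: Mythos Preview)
Your proof is correct, but it is organized quite differently from the paper's. The paper first invokes descent (using the uniqueness assertion) to pass to an extension of $A$ over which each $E_i \to \Gamma$ admits a scheme-theoretic section $s_i$; then $E_i \cong M \times_A \Gamma$ as schemes, $f_1$ is given in these coordinates by $(m,\gamma) \mapsto (f_1|_M(m)\cdot d(\gamma),\, \bar f_1(\gamma))$, and the three pieces $f_1|_M$, $d$, $\bar f_1$ are extended separately using Cartier duality (for the first) and the valuative criterion of properness for maps from the \'etale scheme $\Gamma$ into the finite schemes $M$ and $\Gamma$ (for the second and third). Your argument instead stays over the original $A$: the graph-closure/miracle-flatness ``recurring input'' replaces the valuative criterion as the mechanism for extending morphisms with \'etale target, and the pushout/pullback plus $\Ext^1$-injectivity plus the cocycle correction replace the explicit coordinate description coming from the sections $s_i$. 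The paper's route is shorter and lighter on machinery; yours avoids any base change of $A$ and is a bit more intrinsic. Note incidentally that your cocycle step $Z^1(\Gamma,M)(A)=Z^1(\Gamma,M)(K)$ is really just the valuative criterion again (the scheme is proper over $A$), so the two arguments are closer than they first appear.
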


\begin{proof}
    If $f_1$ extends to an $A$-morphism $f: E_1 \to E_2$, then by flatness and schematic density of $\Spec K$ in $\Spec A$, this extension is unique and it is a homomorphism. Thus by descent, we may and do extend $A$ to assume that there are scheme-theoretic sections $s_i: \Gamma \to E_i$. From this we obtain isomorphisms of $A$-schemes $\vp_i: M \times_A \Gamma \to H$ given functorially by $\vp(m, \gamma) = m s_i(\gamma)$. Using the $\vp_i$, we see that $f_1$ induces a $K$-morphism $g_1: M_K \times_K \Gamma_K \to M_K \times_K \Gamma_K$ given functorially by $g_1(m, \gamma) = (f_1(m)d(\gamma), \overline{f}_1(\gamma))$, where $d: \Gamma_K \to M_K$ is a $K$-morphism. Thus it suffices to extend $f_1|_{M_K}$, $d$, and $\overline{f}_1$ over $A$. Since $M$ is finite and $\Gamma$ is \'etale, the fact that $d$ and $\overline{f}_1$ extend comes from the valuative criterion of properness. The fact that $f_1|_{M_K}$ extends comes from Cartier duality, noting that the Cartier dual of $M$ is finite \'etale.

    Now suppose that $f_1$ is an isomorphism with inverse $g_1$, and let $f$ and $g$ be the unique extensions. By uniqueness of extensions, we see that $f \circ g$ and $g \circ f$ are the respective identities.
\end{proof}

\begin{lemma}\label{lemma:extending-finite-group-schemes}
    Let $A$ be a DVR with fraction field $K$, and let $H$ be a finite $K$-group scheme. Suppose that we are given finite flat $A$-group schemes $H_0$ and $H_1$ and a short exact sequence
    \begin{align}\label{align:ses-1}
    1 \to (H_0)_K \to H \to (H_1)_K \to 1.
    \end{align}
    Suppose moreover that $H_0$ is of multiplicative type and $H_1$ is \'etale.
    
    Then there is a finite flat $A$-group scheme $\sH$ and a short exact sequence
    \begin{align}\label{align:ses-2}
    1 \to H_0 \to \sH \to H_1 \to 1
    \end{align}
    whose base change to $K$ is isomorphic to (\ref{align:ses-1}).
\end{lemma}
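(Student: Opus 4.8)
The plan is to construct $\sH$ by hand as a twisted ``semidirect''-type product $H_0\times H_1$ attached to a normalised $2$-cocycle. The one structural feature of the situation that makes this work is the identity $H_0(A)=H_0(K)$: since $\mathcal{O}(H_0)$ is finite over $A$, any $A$-algebra homomorphism $\mathcal{O}(H_0)\to K$ has image integral over $A$, hence (as $A$ is integrally closed in $K$) image in $A$. Consequently any cocycle one writes down over $K$ is automatically $A$-valued, and the only genuine content is geometric --- making the ``sheets'' below into trivial torsors.

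So I would first pass to a finite local extension $A\subset A'$, with fraction field $K'$, chosen so that $(H_1)_{K'}$ is a constant group scheme $\underline{\Gamma}$ (possible since $H_1$ is finite \'etale) and so that, writing $H_{K'}=\coprod_{\gamma\in\Gamma}H_\gamma$ with $H_\gamma\subset H_{K'}$ the preimage of $\gamma$, each sheet $H_\gamma$ acquires a $K'$-point; such a $K'$ exists because each $H_\gamma$ is finite over $K'$ (so has a point over a finite extension) and there are only finitely many sheets. Renaming, assume $A=A'$ and $H_1=\underline{\Gamma}$, and fix sections $s_\gamma\in H_\gamma(K)$ with $s_e=e$; note that $H_\gamma$ is a left torsor under the commutative group $(H_0)_K$ and $H_e=(H_0)_K$. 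The conjugation action of $H$ on its normal commutative subgroup $(H_0)_K$ factors through a homomorphism $c_0\colon\underline{\Gamma}_K\to(\underline{\Aut}(H_0))_K$, independent of the $s_\gamma$ since $H_0$ is commutative; because $\underline{\Aut}(H_0)\cong\underline{\Aut}(H_0^\vee)$ is finite \'etale over $A$ (its Cartier dual being \'etale) and a morphism of finite \'etale $A$-group schemes is detected on generic fibres (as $\pi_1(\Spec K)\to\pi_1(\Spec A)$ is surjective), $c_0$ extends uniquely to an action $c\colon\underline{\Gamma}\to\underline{\Aut}(H_0)$ over $A$.

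Now the relations $s_\gamma m=c(\gamma)(m)s_\gamma$ (for $m$ in $(H_0)_K$) and $s_\gamma s_{\gamma'}=c_{\gamma,\gamma'}\,s_{\gamma\gamma'}$ define a normalised $2$-cocycle $(\gamma,\gamma')\mapsto c_{\gamma,\gamma'}\in H_0(K)=H_0(A)$ for the action $c$, the cocycle identity being associativity in $H(K)$. I set $\sH:=H_0\times_A\underline{\Gamma}$ as an $A$-scheme, with multiplication $(m,\gamma)\cdot(m',\gamma')=(m\cdot c(\gamma)(m')\cdot c_{\gamma,\gamma'},\ \gamma\gamma')$; the cocycle identity --- an identity inside the finite group $H_0(A)$ --- makes this an associative law, with two-sided unit $(e,e)$ (using $c_{e,\gamma}=c_{\gamma,e}=e$) and with inverses, so $\sH$ is a finite flat $A$-group scheme in which $H_0=H_0\times\{e\}$ is a normal closed subgroup of multiplicative type with \'etale quotient $\underline{\Gamma}$. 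The morphism $(m,\gamma)\mapsto m\,s_\gamma$ then identifies $\sH_K$ with $H$ compatibly with the two short exact sequences. (When the part of $A'/A$ actually needed is \'etale, the resulting $\sH$ descends to the original $A$ by the uniqueness of extensions in Lemma~\ref{lemma:extending-auts} applied over the components of $A'\otimes_A A'$, which is a finite product of DVRs.)

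The step I expect to be the real obstacle is precisely making the sheets $H_\gamma$ into trivial torsors, i.e.\ the passage to the finite local extension of $A$: in general the $H_\gamma$ are nontrivial $(H_0)_K$-torsors --- their classes in $H^1_{\mathrm{fppf}}(K,H_0)$ can be nonzero --- and, for instance when $H_0=\mu_n$, a class of nonzero valuation can be killed only by a \emph{ramified} extension, which (unlike an \'etale one) is not amenable to the descent of the previous parenthetical. Granting that step, everything else is the routine cocycle bookkeeping above, made painless by the identity $H_0(A)=H_0(K)$.
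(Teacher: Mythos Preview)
Your argument is essentially the paper's: reduce to $H_1=\underline\Gamma$ constant with a scheme-theoretic section $\underline\Gamma_K\to H$, extend the conjugation action (you via \'etaleness of $\underline{\Aut}(H_0)$, the paper via Lemma~\ref{lemma:extending-auts}) and the $2$-cocycle (you via $H_0(A)=H_0(K)$, the paper via the valuative criterion), and define $\sH=H_0\times\underline\Gamma$ with the twisted multiplication. For the reduction step the paper, exactly like you, appeals to uniqueness (Lemma~\ref{lemma:extending-auts}) and says simply ``by descent, we may pass to a quasi-finite extension of $A$.''

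Your worry about descending from a \emph{ramified} $A'$ is justified, and the paper does not address it either: Lemma~\ref{lemma:extending-auts} is a statement about DVRs, and $A'\otimes_A A'$ is not one when $A'/A$ is ramified. In fact the statement as written appears to be false without a section hypothesis. With $\pi$ a uniformizer of $A$, take $H_0=\mu_p$, $H_1=\bZ/p\bZ$ (trivial action), and let $H=\coprod_{\gamma=0}^{p-1}\Spec K[t]/(t^p-\pi^\gamma)$ with multiplication sending the coordinate to $t\otimes t$ or $\pi^{-1}\,t\otimes t$ according as the indices sum to ${<}p$ or ${\geq}p$; this is a commutative $K$-group sitting in $1\to(\mu_p)_K\to H\to(\bZ/p\bZ)_K\to 1$. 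Any $\sH$ as in the conclusion would have $\sH\to\bZ/p\bZ$ finite flat (miracle flatness, since $\bZ/p\bZ$ is regular and $\sH$ is Cohen--Macaulay), hence an fppf $\mu_p$-torsor, so its sheet over $1\in\bZ/p\bZ$ would be a $\mu_p$-torsor over $A$ with generic fibre $\{t^p=\pi\}$; but $[\pi]$ is not in the image of $A^*/(A^*)^p\to K^*/(K^*)^p$. This does not damage the paper: its sole use of the lemma (in the proof of Lemma~\ref{lemma:extending-reductive-groups}) feeds in an extension manufactured by Lemma~\ref{lemma:booher-tang-variant} from a Hochschild cocycle, hence one that \emph{already admits a section over $K$}, with $H_1$ already constant. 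Under that extra hypothesis no extension of $A$ is needed at all, and both your cocycle construction and the paper's go through directly over $A$.
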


\begin{proof}
    Note that by Lemma~\ref{lemma:extending-auts}, the pair of the extension (\ref{align:ses-2}) and its isomorphism with (\ref{align:ses-1}) is unique up to unique isomorphism if it exists. Thus by descent, we may pass to a quasi-finite extension of $A$ to assume that $H_1$ is constant there is a scheme-theoretic section $s\colon (H_1)_K \to H$. Since $H_1$ is constant, Lemma~\ref{lemma:extending-auts} also shows that the induced conjugation map $(H_1)_K \times_K (H_0)_K \to (H_0)_K$, $(h_1, h_0) \mapsto s(h_1) h_0 s(h_1)^{-1}$ extends uniquely to an action map $\alpha\colon H_1 \times_A H_0 \to H_0$.
    
    Define $\sH$ scheme-theoretically as the product $H_0 \times H_1$. Note that $\sH_K \cong H$ via the map $f(h_0, h_1) = h_0 s(h_1)$. Under this isomorphism, the multiplication map on $H$ corresponds to the map $\mu_K: \sH_K \times_K \sH_K \to \sH_K$ given by
    \[
    \mu_K((h_0, h_1), (h_0', h_1')) = (h_0 \cdot s(h_1)h_0's(h_1)^{-1} \cdot c(h_1, h_1'), h_1 h_1')
    \]
    where $c(h_1, h_1') = s(h_1)s(h_1')s(h_1 h_1')^{-1}$. Since $H_1$ is \'etale and $H_0$ is finite, $c$ extends to an $A$-morphism $H_1 \times_A H_1 \to H_0$ which we will also denote by $c$. Similarly, $s(1) \in H_0(A).$
    
    Define an $A$-morphism $\mu: \sH \times_A \sH \to \sH$ extending $\mu_K$ by
    \[
    \mu((h_0, h_1), (h_0', h_1')) = (h_0 \cdot \alpha(h_1, h_0') \cdot c(h_1, h_1'), h_1 h_1')
    \]
    Diagrams of flat $A$-schemes may be checked to be commutative after passage to $K$, so it follows that $\mu$ is a monoid law (with identity $(s(1)^{-1}, 1)$). Moreover, $\mu$ is actually a group law: to see this functorially, let $B$ be an $A$-algebra and let $(h_0, h_1) \in \sH(B)$. By assumption on $\alpha$, there is a unique $h_0' \in H_0(B)$ such that $\alpha(h_1, h_0') = h_0^{-1} s(1)^{-1} c(h_1, h_1^{-1})^{-1}$. The right inverse of $(h_0, h_1)$ under $\mu$ is clearly $(h_0', h_1^{-1})$, and by the Yoneda lemma this gives a right inverse morphism $r: \sH \to \sH$. Note that $r$ is also a left inverse map (as one can check over $K$), so indeed $\sH$ is an $A$-group scheme.
    
    Finally, there are $A$-homomorphisms $i: H_0 \to \sH$ and $\pi: \sH \to H_1$ given by $i(h_0) = (h_0s(1)^{-1}, 1)$ and $\pi(h_0, h_1) = h_1$, and it is straightforward to check that these form a short exact sequence whose base change to $K$ is isomorphic to (\ref{align:ses-1}).
\end{proof}

The following lemma is a mild variant of \cite[Lemma 2.23]{booher-tang}.

\begin{lemma}\label{lemma:booher-tang-variant}
    Let $S$ be a connected scheme, and let 
    \begin{align}\label{align:extension-1}
    1 \to M \to E \to \Gamma \to 1
    \end{align}
    be a short exact sequence of finitely presented $S$-group schemes such that $M$ is of multiplicative type, $\Gamma$ is constant of order $n$, and $E \to \Gamma$ admits a scheme-theoretic section. There exists a short exact sequence
    \begin{align}\label{align:extension-2}
    1 \to M[n] \to H \to \Gamma \to 1
    \end{align}
    such that (\ref{align:extension-1}) is obtained from (\ref{align:extension-2}) by pushing forward along the inclusion $M[n] \to M$. Moreover, the pushout of (\ref{align:extension-2}) by the inclusion $M[n] \to M[n^2]$ is unique up to isomorphism.
\end{lemma}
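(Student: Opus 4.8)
The plan is to present $E$ explicitly via the given section, recast the statement in terms of $2$-cocycles, and reduce the existence of $H$ to the possibility of ``extracting an $n$-th root'' inside $M$ --- this last being where the multiplicative-type hypothesis does its work.

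First I would set up the cocycle dictionary. Write $\Gamma = \underline{G}$ with $|G| = n$, and normalize the section $s\colon\Gamma\to E$ so that $s(1)=1$ (translating by an element of $M(S)$). Then $s$ identifies $E$ with $M\times_S\Gamma$ as $S$-schemes, and --- using $\Gamma^{\times k}=\coprod_{G^k}S$, so that a morphism $\Gamma^{\times k}\to M$ is just a $G^k$-tuple of elements of $M(S)$ --- the group law is recorded by a homomorphism $\alpha\colon G\to\Aut_S(M)$ (conjugation, a genuine homomorphism because $M$ is abelian) together with a $2$-cocycle $c=(c_{g,h})\in Z^2(G,M(S))$, where $c_{g,h}=s(g)s(h)s(gh)^{-1}$ and $G$ acts on $M(S)$ via $\alpha$. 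Since $M[n]$ is $\alpha$-stable, any cocycle $c'$ cohomologous to $c$ and valued in $M[n](S)$ yields a short exact sequence $1\to M[n]\to H\to\Gamma\to1$, and its pushforward along $M[n]\hookrightarrow M$ is described by the same $c'$ regarded as $M(S)$-valued --- hence is isomorphic to $E$. So everything reduces to: \emph{after a suitable base change, replace $c$ by a cohomologous cocycle valued in $M[n]$.}

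The mechanism is the cochain-level annihilation of $H^2(G,-)$ by $n$. Setting $b_g:=\prod_{h\in G}c_{g,h}\in M(S)$, the cocycle identity gives $c^{\,n}=\partial b$. If one finds $a\in C^1(G,M(S))$ with $a_g^{\,n}=b_g^{-1}$, then $c':=c\cdot\partial a$ is cohomologous to $c$ and satisfies $(c')^{\,n}=c^{\,n}\cdot\partial(a^{\,n})=\partial b\cdot\partial(b^{-1})=1$, i.e.\ $c'$ is $M[n]$-valued. Such an $a$ exists after an fppf base change $S'\to S$, because $M$ is of multiplicative type and $[n]\colon\mathbb{G}_m\to\mathbb{G}_m$ is an fppf epimorphism, so $b_g$ acquires an $n$-th root (at least on the toral part of $M$, which already suffices when $M$ is a torus). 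The resulting $H$ lives only over $S'$ and depends on the choice of $a$. To descend it I would use the ``moreover'' clause: two such choices differ by a class in $\ker\!\big(H^2(G,M[n])\to H^2(G,M)\big)$, which by the long exact sequence of $0\to M[n]\to M\to M/M[n]\to0$ is the image of $H^1(G,M/M[n])$; pushing forward along $M[n]\hookrightarrow M[n^2]$ sends this to zero, since under the identifications $M/M[n]\cong[n]M$ and $M/M[n^2]\cong[n^2]M$ the comparison map becomes multiplication by $n$ while $H^1(G,-)$ is $n$-torsion, so the composite $H^1(G,M/M[n])\to H^2(G,M[n])\to H^2(G,M[n^2])$ vanishes. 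Hence the $M[n^2]$-pushouts of the locally constructed $H$'s agree on overlaps and glue to a global extension by $M[n^2]$, and the same uniqueness, applied over the cover of the cover, lets the $H$'s themselves descend to the desired $H$ over $S$.

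The delicate point is this last descent: the $n$-th roots are genuinely non-canonical (and are unavailable on the finite part of $M$ unless $n$ is prime to its order), so the argument must be organized so that all the ambiguity is absorbed into the $M[n^2]$-pushout, whose rigidity is exactly the small cohomological computation just indicated. By contrast, the cocycle translation, the verification that pushing $H$ forward recovers $E$, and the annihilation statements for $H^2(G,-)$ and $H^1(G,-)$ are all routine.
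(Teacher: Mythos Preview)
Your core approach matches the paper's: translate the extension into a Hochschild cohomology class in $H^2(G, M(S))$ via the section, use that $|G|=n$ annihilates this group, and lift to $M[n]$-coefficients. The paper simply asserts that an $n$-torsion class in $H^2(G,M(S))$ lies in the image of $H^2(G,M[n](S))$; you unpack this into the cochain identity $c^{\,n}=\partial b$ together with the $n$-th-root step $a^{\,n}=b^{-1}$, $c'=c\cdot\partial a$. Your treatment of the uniqueness clause is likewise the same diagram chase as the paper's.

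The genuine gap is the descent. You correctly argue that the $M[n^2]$-pushouts of the locally constructed $H$'s are isomorphic on overlaps, but ``isomorphic'' is weaker than ``equipped with descent data,'' and the final clause --- ``the same uniqueness, applied over the cover of the cover, lets the $H$'s themselves descend'' --- is not justified: canonicity of the $M[n^2]$-pushout yields no comparison between the two pullbacks of $H$ itself on $S'\times_S S'$. In fact the lemma as stated appears to be false over a general connected $S$. Take $S=\Spec\bZ[i]$, $M=\bG_m$, $\Gamma=\bZ/2$, and let $E$ be the extension with normalized cocycle $c_{1,1}=i\in\bZ[i]^\times$; then the map $H^2(\bZ/2,\mu_2(S))\to H^2(\bZ/2,\bG_m(S))$ is zero (both groups are $\bZ/2$, and $-1=i^2$ is already a square in $\bZ[i]^\times$) while $[c]\neq0$, so no $H$ exists. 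The paper's own proof elides the same point: the implication ``$n$-torsion $\Rightarrow$ in the image of $M[n]$-coefficients'' requires $M(S)$ to be $n$-divisible. In the paper's sole application (the proof of Lemma~\ref{lemma:extending-reductive-groups}) $M$ is a split torus over a field which one is explicitly permitted to enlarge, so $M(K)$ can be made $n$-divisible, your $a$ exists without any base change, and no descent is needed.
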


\begin{proof}
    Since $E \to \Gamma$ admits a section, (\ref{align:extension-1}) corresponds to an element $\alpha$ of the Hochschild cohomology group $\mathrm{H}^2(\Gamma, M)$ (see \cite[Proposition 2.3.6]{Demarche}). Since $\Gamma$ is constant and $S$ is connected, $\mathrm{H}^i(\Gamma, M)$ agrees with the ordinary group cohomology $\mathrm{H}^i(\Gamma(S), M(S))$ for all $i \geq 0$. This latter group is killed by $n$ by classical theory, so $\alpha$ is the image of a class in $\mathrm{H}^2(\Gamma(S), M[n](S)) = \mathrm{H}^2(\Gamma, M[n])$; this corresponds to the desired extension (\ref{align:extension-2}). To see the final claim, consider the commutative diagram with exact rows
    \[
    \begin{tikzcd}
        \mathrm{H}^1(\Gamma, M) \arrow[r] \arrow[d, "n"]
        &\mathrm{H}^2(\Gamma, M[n]) \arrow[r] \arrow[d]
        &\mathrm{H}^2(\Gamma, M) \arrow[d] \\
        \mathrm{H}^1(\Gamma, M) \arrow[r]
        &\mathrm{H}^2(\Gamma, M[n^2]) \arrow[r]
        &\mathrm{H}^2(\Gamma, M)
    \end{tikzcd}
    \]
    Since $\mathrm{H}^1(\Gamma, M)$ is killed by $n$ (as above), a diagram chase concludes the proof.
\end{proof}

The following lemma is analogous to the fact that a (real) Lie group $G$ with finitely many connected components admits a maximal compact subgroup which meets every component of $G$.

\begin{lemma}\label{lemma:extending-reductive-groups}
    Let $A$ be a DVR with fraction field $K$, and let $G$ be a (possibly disconnected) reductive $K$-group. There exists a quasi-finite local extension of DVRs $A \subset A'$ inducing the extension of fraction fields $K \subset K'$ and a geometrically reductive smooth affine $A'$-integral model $\sG'$ of $G_{K'}$.
\end{lemma}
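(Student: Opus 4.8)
The plan is to first reduce to the connected case and then use the structure theory of reductive groups together with the classification of integral models. Since $G^0$ is a connected reductive $K$-group, after a quasi-finite local extension of $A$ we may assume $G^0$ is split; by Chevalley's theorem there is then a split reductive group scheme $\sG^0$ over $\bZ$ (hence over $A$) with generic fiber $G^0$. The component group $\pi_0(G) = G/G^0$ is a finite \'etale $K$-group, which becomes constant after a further quasi-finite extension of $A$; by the valuative criterion of properness it extends uniquely to a finite \'etale (in fact constant) $A$-group $\sP$. The real content is to extend the extension
\[
1 \to G^0 \to G \to \pi_0(G) \to 1
\]
over $A$ in a way compatible with the given generic fiber.

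The key idea is to understand this extension via the action of $\pi_0(G)$ on $G^0$ by conjugation and a $2$-cocycle valued in the center $Z = Z(G^0)$. Choosing set-theoretic (scheme-theoretic, after extension) sections, the conjugation action gives a homomorphism $\pi_0(G) \to \mathrm{Out}(G^0) = \mathrm{Out}(\sG^0)_K$; since $\mathrm{Out}(\sG^0)$ is a constant $A$-group (being the automorphism group of the based root datum), this homomorphism extends over $A$. Lifting the outer action to an honest action $\pi_0(G) \times \sG^0 \to \sG^0$ can be arranged after possibly enlarging by the central torus, using the pinning of $\sG^0$; this is where I expect to use that $\sG^0$ is split so that $\mathrm{Aut}(\sG^0) = \sG^0_{\mathrm{ad}} \rtimes \mathrm{Out}(\sG^0)$. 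Once the action is extended, the extension $G$ is classified by a class in a Hochschild cohomology group $\mathrm{H}^2(\pi_0(G), Z)$; this is where Lemmas~\ref{lemma:extending-finite-group-schemes} and \ref{lemma:booher-tang-variant} enter — they let one extend the central extension data over $A$ by first replacing $Z$ with the finite multiplicative-type subgroup $Z[n]$ ($n = |\pi_0(G)|$), since the relevant cohomology is $n$-torsion, and then extending the finite group scheme extension using Lemma~\ref{lemma:extending-finite-group-schemes}, finally pushing back out to $Z$.

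Concretely, the construction is: form the pushout of the extended finite extension $1 \to Z[n] \to \sH \to \sP \to 1$ (from Lemma~\ref{lemma:extending-finite-group-schemes}) along $Z[n] \hookrightarrow \sG^0$, using the extended conjugation action of $\sP$ on $\sG^0$ to twist; the result $\sG'$ is a smooth affine $A$-group with $(\sG')^0 = \sG^0$ reductive and $\sG'/(\sG')^0 = \sP$ finite \'etale, so $\sG'$ is geometrically reductive by the cited result of Alper. That the generic fiber is $G_{K'}$ follows from the uniqueness assertions in Lemmas~\ref{lemma:extending-auts} and \ref{lemma:booher-tang-variant} together with the fact that the pushout construction is compatible with base change.

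The main obstacle, I expect, is the bookkeeping needed to extend the conjugation action $\pi_0(G) \to \mathrm{Aut}(G^0)$ over $A$: the subtlety is that $\mathrm{Aut}(\sG^0)$ is not finite (the inner part $\sG^0_{\mathrm{ad}}$ intervenes), so one cannot directly apply the valuative criterion of properness. One must either work modulo inner automorphisms (landing in the finite constant group $\mathrm{Out}$, which extends trivially) and then choose a coherent lift of inner twists — absorbing the ambiguity into the central $2$-cocycle — or use the pinning to rigidify. Handling this carefully, and verifying that the resulting pushout is genuinely a \emph{scheme-theoretically} closed extension with the right generic fiber, is the technical heart; the rest is assembling the cited lemmas.
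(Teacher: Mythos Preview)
Your proposal is essentially correct and follows the same overall architecture as the paper's proof: split $G^0$, reduce the extension of $\pi_0(G)$ by a multiplicative-type group to its $n$-torsion via Lemma~\ref{lemma:booher-tang-variant}, extend the resulting finite extension over $A$ via Lemma~\ref{lemma:extending-finite-group-schemes}, and push out along the inclusion into $\sG^0$.

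The one substantive difference is the choice of multiplicative-type group. You work with the center $Z=Z(G^0)$, which forces you to separately (i) lift the outer action $\pi_0(G)\to\Out(G^0)$ to an honest action on $\sG^0$ via a pinning, and (ii) verify that the resulting $2$-cocycle lands in $Z$. The paper instead takes $N=N_G((\sB_0)_K,(\sT_0)_K)$, the normalizer of a Borel--torus pair: this is already a subgroup of $G$ sitting in a short exact sequence $1\to(\sT_0)_K\to N\to G/G^0\to 1$, so Lemma~\ref{lemma:booher-tang-variant} applies directly with $M=(\sT_0)_K$ and no separate discussion of $\Out$ or of where the cocycle lives is needed. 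The pushout $\sG_0\times^{\sT_0[n]}\sH$ then has generic fiber $G$ tautologically, since $H\subset N\subset G$ and $G^0\cdot H=G$ with $G^0\cap H=(\sT_0)_K[n]$. In effect, choosing the pair $(\sB_0,\sT_0)$ is the paper's substitute for your pinning, and working with $N$ rather than $\pi_0(G)$ directly absorbs your steps (i) and (ii) into the group-theoretic setup. The obstacle you flag---extending the conjugation action over $A$---is still implicitly present in the paper's pushout, and is resolved by the same mechanism you describe (the splitting $\Aut(\sG_0,\sB_0,\sT_0)\cong\sT_0^{\mathrm{ad}}\rtimes\Out(\sG_0)$), though the paper does not spell this out.
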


\begin{proof}
    By extending $A$, we may assume $G^0$ is split and $G/G^0$ is constant. First let $\sG_0$ be a split reductive $A$-group scheme with generic fiber $G^0$, which exists by the Existence and Isomorphism Theorems \cite[Exp.\ XXV, Th\'eor\`eme 1.1]{SGA3}. Let $(\sB_0, \sT_0)$ be a Borel-torus pair of $\sG_0$ over $A$, and let $N = N_G((\sB_0)_K, (\sT_0)_K)$, the normalizer of the pair $((\sB_0)_K, (\sT_0)_K)$ in $G$. By Lemma~\ref{lemma:booher-tang-variant}, if the constant group scheme $N/(\sT_0)_K \cong G/G^0$ is of order $n$, then after extending $K$ there is an extension
    \begin{align}\label{align:extension-3}
    1 \to (\sT_0)_K[n] \to H \to N/(\sT_0)_K \to 1
    \end{align}
    whose pushout along $(\sT_0)_K[n] \subset (\sT_0)_K$ is the tautological extension for $(\sT_0)_K \subset N$.
    %Pushing out by $(\sT_0)_K[n] \to (\sT_0)_K[n^2]$, we obtain a short exact sequence
    %\begin{align}\label{align:extension-4}
    %1 \to (\sT_0)_K[n^2] \to H \to N/(\sT_0)_K \to 1
    %\end{align}
    %which is independent of the choice of (\ref{align:extension-3}), up to isomorphism.
    If $\Gamma$ is the constant $A$-group scheme corresponding to $N/(\sT_0)_K \cong G/G^0$, then using Lemma~\ref{lemma:extending-finite-group-schemes} we find that there exists a finite flat $A$-group scheme $\sH$ and a short exact sequence
    \[
    1 \to \sT_0[n] \to \sH \to \Gamma \to 1
    \]
    whose generic fiber is (\ref{align:extension-3}).
    
    Now note that $G$ is isomorphic to the pushout $G^0 \times^{(\sT_0)_K[n]} H \coloneqq (G^0 \times_K H)/(\sT_0)_K[n]$. Thus if we define $\sG = \sG_0 \times^{\sT_0[n]} \sH$, we find that $\sG_K \cong G$. Moreover, $\sG^0 = \sG_0$ is reductive and $\sG/\sG^0 \cong \Gamma$ is finite \'etale, whence $\sG$ is a geometrically reductive smooth affine model of $G$.
\end{proof}

\section{Extending homomorphisms}\label{section:extending-homs}

In this section we aim to establish, essentially, that the quotient stack $[\underline{\Hom}_{S\textrm{-gp}}(H, G)/G]$ satisfies the existence part of the valuative criterion of properness when $S$ is a locally noetherian scheme, $H$ and $G$ are smooth $S$-affine $S$-group schemes, and $G$ is geometrically reductive. (This stack is often algebraic but it is usually not proper; it is locally of finite type and quasi-separated, but rarely quasi-compact or separated. See \cite[Section 2.1]{booher-tang} or \cite{Hom-schemes} for a discussion of $\underline{\Hom}_{S\textrm{-gp}}(H, G)$.) We begin with a simple lemma which mildly generalizes \cite[Corollary 2.10.10]{Kaletha-Prasad}.

\begin{lemma}\label{lemma:kaletha-prasad}
    Let $A \subset A'$ be a finite local extension of strictly henselian DVRs inducing the extension $K \subset K'$ of fraction fields. Let $X$ be a smooth $A$-scheme, and let $X'$ be an affine $A'$-scheme. A $K'$-morphism $f: X_{K'} \to X'_{K'}$ extends (uniquely) to an $A'$-morphism $X_{A'} \to X'$ if and only if $f(X(A)) \subset X'(A')$. In particular, if $f(X(A)) \subset X'(A')$ then $f(X(A')) \subset X'(A')$.
\end{lemma}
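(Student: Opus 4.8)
The plan is to establish both implications of the ``if and only if'', the forward one being formal and the reverse one carrying the content; the concluding assertion $f(X(A'))\subseteq X'(A')$ then drops out of the extension. For the forward direction: since $X$ is $A$-smooth, hence $A$-flat, the scheme $X_{A'}$ is $A'$-flat and $X_{K'}$ is schematically dense in it, so (as $X'$ is affine, hence separated) any $A'$-morphism extending $f$ is unique; and if $\widetilde f\colon X_{A'}\to X'$ is such an extension, then for $x\in X(A)$ the base change $x_{A'}\in X(A')$ has $\widetilde f(x_{A'})\in X'(A')$ with generic fibre $f(x)$, so $f(X(A))\subseteq X'(A')$ --- and likewise $f(x)=\widetilde f(x)\in X'(A')$ for every $x\in X(A')$, which is the last sentence. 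For the reverse direction one first reduces to $X$ affine: cover $X$ by affine opens $U_i$; each $U_i$ is smooth affine over $A$ with $U_i(A)\subseteq X(A)$, so $f$ maps $U_i(A)$ into $X'(A')$; granting the affine case, $f|_{(U_i)_{K'}}$ extends over $(U_i)_{A'}$, and these extensions agree on the (flat) overlaps by uniqueness and hence glue.

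So assume $X=\Spec R$ and $X'=\Spec R'$ are affine, let $\pi$ be a uniformizer of $A'$, write $R_{A'}=R\otimes_A A'$ (a smooth, in particular flat, $A'$-algebra), and note $\mathcal O(X_{K'})=R_{A'}[1/\pi]$, so that $f$ corresponds to an $A'$-algebra map $\phi\colon R'\to R_{A'}[1/\pi]$; we must show $\phi(r')\in R_{A'}$ for every $r'\in R'$. Fix $r'$ and choose $n\in\bZ$ minimal with $g:=\pi^n\phi(r')\in R_{A'}$; the goal is to rule out $n\ge 1$. If $n\ge 1$ then by minimality $g\notin\pi R_{A'}$, so its image $\overline g$ in $R_{A'}/\pi R_{A'}=\mathcal O(X_{\kappa'})$ is nonzero, where $\kappa'$ is the (separably closed) residue field of $A'$. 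On the other hand, for each $x\in X(A)$ the base change $x_{A'}\in X(A')$ satisfies $x_{A'}^{*}(g)=\pi^n\, f(x)^{*}(r')$, and $f(x)\in X'(A')$ by hypothesis gives $f(x)^{*}(r')\in A'$, so $x_{A'}^{*}(g)\in\pi^n A'\subseteq\pi A'$; reducing modulo $\pi$, the function $\overline g$ vanishes at the $\kappa'$-point obtained by base change from $\overline x:=(x\bmod\fm_A)\in X(\kappa)$, where $\kappa$ is the residue field of $A$.

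It remains to deduce $\overline g=0$, and this step is the main obstacle; it is exactly where the strict-henselian hypothesis enters. Since $A$ is henselian and $X$ is smooth, $X(A)\to X(\kappa)$ is surjective; since $\kappa$ is separably closed (in particular infinite) and $X_\kappa$ is smooth, $X(\kappa)$ is Zariski-dense in $X_\kappa$; so $\{\overline x : x\in X(A)\}$ is dense in the reduced $\kappa$-scheme $X_\kappa$. Writing $\overline g=\sum_i b_i\otimes c_i$ with $b_i\in\mathcal O(X_\kappa)$ and the $c_i\in\kappa'$ linearly independent over $\kappa$, the vanishing of $\overline g$ at each such $\overline x$ forces $b_i(\overline x)=0$ for all $i$; by density and reducedness $b_i=0$, so $\overline g=0$, a contradiction. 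Hence $n\le 0$, so $\phi(r')=\pi^{-n}g\in R_{A'}$, and the affine case --- hence the lemma --- follows. The crux is thus not the mere existence of integral points but their density in the special fibre, together with the fact that this density is not lost under the (possibly inseparable, hence a priori problematic) residue extension $\kappa\subseteq\kappa'$; strict henselianness of $A$ secures both.
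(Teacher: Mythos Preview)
Your proof is correct and complete. The approach differs from the paper's: the paper reduces to the case $A=A'$ by forming the Weil restriction $\mathrm{R}_{A'/A}(X')$ (which exists as an affine $A$-scheme since $A\subset A'$ is finite flat), rewriting $f$ as a $K$-morphism $f'\colon X_K\to \mathrm{R}_{K'/K}(X'_{K'})$ with $f'(X(A))\subset \mathrm{R}_{A'/A}(X')(A)$, and then invoking \cite[Corollary~2.10.10]{Kaletha-Prasad} as a black box. Your argument is instead a direct unpacking of that corollary, together with the linear-independence trick (writing $\overline g=\sum_i b_i\otimes c_i$ with the $c_i$ a $\kappa$-basis of $\kappa'$) to handle the possibly purely inseparable residue extension $\kappa\subset\kappa'$. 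Your route is more self-contained and makes explicit where strict henselianness enters (surjectivity of $X(A)\to X(\kappa)$ and density of $X(\kappa)$ in the smooth reduced scheme $X_\kappa$); the paper's route is shorter and isolates the new content as the Weil-restriction adjunction, making clear that the lemma is the known $A=A'$ case plus a formal step.
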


\begin{proof}
    By passing to an open cover, we may and do assume that $X$ is affine. If $f$ extends to an $A'$-morphism $X_{A'} \to X'$, then clearly $f(X(A)) \subset X'(A')$, so we need only show the converse. Note that $A \subset A'$ is flat, so the Weil restriction $\mathrm{R}_{A'/A}(X')$ exists as an affine $A$-scheme. The $K'$-morphism $f: X_{K'} \to X'_{K'}$ corresponds to a $K$-morphism $f': X \to \mathrm{R}_{K'/K}(X'_{K'})$ satisfying $f'(X(A)) \subset \mathrm{R}_{A'/A}(X')(A)$. By \cite[Corollary 2.10.10]{Kaletha-Prasad}, the $K$-morphism $f'$ extends to an $A$-morphism $X \to \mathrm{R}_{A'/A}(X')$, which in turn corresponds to an $A'$-morphism $X_{A'} \to X'$ extending $f$.
\end{proof}

Recall that, if $K$ is a discretely valued field with valuation $v$ and $X$ is an affine $K$-scheme, then a subset $B \subset X(K)$ is \textit{bounded} if, for every $f \in \Gamma(X, \sO_X)$, the function $b \mapsto v(f(b))$ is bounded below on $B$. For a detailed discussion of boundedness, see \cite[Section 2.2]{Kaletha-Prasad}. We need also standard properties of the (reduced) Bruhat--Tits building, which are summarized in \cite[Chapter 4]{Kaletha-Prasad}.

\begin{lemma}\label{lemma:parahoric-existence}
    Let $A$ be a henselian DVR with fraction field $K$, and let $G$ be a (possibly disconnected) reductive $K$-group. If $B \subset G(K)$ is a bounded subgroup, then there is a facet $F$ of the Bruhat--Tits building $\sB(G^0)$ such that $B$ stabilizes the barycenter of $F$.
\end{lemma}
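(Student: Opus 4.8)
The plan is to reduce to the connected case, where the statement is a standard fact of Bruhat–Tits theory, and then deal with the component group by an averaging/fixed-point argument. First I would consider the action of $B$ on the building $\sB(G^0)$. Since $G^0$ is normal in $G$, the group $G(K)$ acts on $\sB(G^0)$ by isometries (via the natural action extending that of $G^0(K)$ — the outer part acts through a finite group of diagram automorphisms together with translations of apartments), so in particular $B$ acts by isometries on the complete CAT(0) metric space $\sB(G^0)$. The key point I need is that $B$ has a bounded orbit on $\sB(G^0)$: this is where the hypothesis that $B$ is a bounded subgroup of $G(K)$ enters. For this I would use the standard dictionary between boundedness in $G(K)$ and bounded subsets of the building — e.g.\ \cite[Section 4.2]{Kaletha-Prasad} or the analogous statement for $G^0(K)$ combined with the fact that $B \cap G^0(K)$ has finite index in $B$ — to conclude that some (equivalently every) orbit $B \cdot x$ is a bounded subset of $\sB(G^0)$.

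Granting a bounded orbit, the Bruhat–Tits fixed-point theorem (the center-of-mass/circumcenter argument for groups acting by isometries on a complete CAT(0) space with a bounded orbit) produces a point of $\sB(G^0)$ fixed by $B$. Then I would take $F$ to be the unique facet whose (open) cell contains this fixed point. Since $B$ acts by simplicial isometries, it preserves $F$, and since the barycenter of $F$ is canonically attached to $F$ (it is the circumcenter of the orbit, or simply the center of mass of the vertices of $F$), $B$ fixes the barycenter of $F$. This gives the conclusion.

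The main obstacle is the first step: making precise the passage from "$B$ is bounded in $G(K)$" to "$B$ has a bounded orbit in $\sB(G^0)$," in the possibly-disconnected setting where the action of $B$ on $\sB(G^0)$ factors through the combination of an inner action of $B \cap G^0(K)$ (or of the image of $B$ in the adjoint group) and an outer action by automorphisms. One clean way is: $B_0 := B \cap G^0(K)$ is bounded in $G^0(K)$ and of finite index in $B$, so $B_0$ fixes a point $x_0 \in \sB(G^0)$ by the connected case; then $B$ permutes the finitely many $B$-translates of $x_0$, so the orbit $B \cdot x_0$ is finite, hence bounded, and the fixed-point theorem applies to $B$ directly. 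I would also need to recall that boundedness of $B_0$ as a subgroup of $G^0(K)$ indeed forces a fixed point — this is exactly the standard statement that bounded subgroups of $G^0(K)$ are contained in parahoric-type stabilizers, i.e.\ fix a point of the building. With these standard inputs assembled, the argument is short; the content is entirely in correctly invoking Bruhat–Tits theory for the disconnected group, for which the references in \cite[Chapter 4]{Kaletha-Prasad} suffice.
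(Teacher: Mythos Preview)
Your proposal is correct and follows essentially the same route as the paper: reduce to the connected case to get a fixed point $x_0$ for $B_0 = B \cap G^0(K)$, use the finite index of $B_0$ in $B$ to obtain a finite (hence bounded) $B$-orbit $B\cdot x_0$, apply the Bruhat--Tits fixed-point theorem to produce a $B$-fixed point, and then pass to the barycenter of the facet containing it. The paper makes the averaging step explicit by taking the convex hull of $B\cdot x_0$ and invoking \cite[Theorem 4.2.12]{Kaletha-Prasad} for its unique barycenter, but this is exactly your circumcenter/center-of-mass argument.
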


\begin{proof}
    Note first that since $G(K)$ acts on $G^0(K)$ by conjugation, it also acts on $\sB(G^0)$, and the restriction of this action to $G^0(K)$ is the usual action. By \cite[Corollary 4.2.14]{Kaletha-Prasad}, there is a point $x_0$ of the (restricted) Bruhat--Tits building $\sB(G^0)$ such that $B \cap G^0(K)$ stabilizes $x_0$. Since $B \cap G^0(K)$ is of finite index in $B$, it follows that the set $Bx_0$ is finite. If $X$ is the convex hull of $Bx_0$ in $\sB(G^0)$, then $X$ is closed and bounded by \cite[Lemma 1.1.13]{Kaletha-Prasad}, and \cite[Theorem 4.2.12]{Kaletha-Prasad} shows that $X$ has a unique barycenter $x_1$, invariant under all isometries of $\sB(G^0)$ which preserve $X$. In particular, the action of $B$ on $\sB(G^0)$ preserves $x_1$. If $F$ is the (open) facet of $\sB(G^0)$ containing $x_1$, then the action of $B$ preserves the barycenter of $F$, as desired.
\end{proof}

If $G$ is a geometrically reductive smooth affine group scheme over a henselian DVR $A$ with fraction field $K$ and valuation $v$, then we let $G(K)^1$ be the open subgroup $G^0(K)^1 \cdot G(A)$ of $G(K)$, where $G^0(K)^1$ is defined as in \cite[Section 2.6(d)]{Kaletha-Prasad}:
\[
G^0(K)^1 \coloneqq \{g \in G^0(K): v(\chi(g)) = 0 \text{ for all } \chi \in X^*(G^0)\},
\]
where $X^*(G^0)$ is the group of characters $\chi: G^0 \to \bG_m$. As in the connected case, every bounded subgroup of $G(K)$ is contained in $G(K)^1$ because $G(K)/G(K)^1$ is topologically isomorphic to a subgroup of the torsion-free discrete group $X^*(T)$, where $T$ is the maximal central $K$-torus of $G^0$. Notice that $G^0(K)^1 = G(K)^1 \cap G^0(K)$.

\begin{lemma}\label{lemma:extending-reductive-groups-2}
    Let $A$ be an excellent henselian DVR with fraction field $K$, and let $G$ be a geometrically reductive smooth affine $A$-group scheme. If $B \subset G(K)$ is a bounded subgroup, then there exists a finite local extension of DVRs $A \subset A'$ with fraction field $K'$ and a geometrically reductive smooth affine $A'$-integral model $G'$ of $G_{K'}$ such that $B \subset G'(A')$. Moreover, for any such $G'$, there is a further extension of $A'$ such that the subgroup $G'(A')$ is $G^0(K')$-conjugate to $G(A')$.
\end{lemma}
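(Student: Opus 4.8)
The plan is to reduce both assertions to a dictionary between geometrically reductive models and hyperspecial points of Bruhat--Tits buildings, together with two facts from Bruhat--Tits theory. \emph{The dictionary}: if $\sO$ is a henselian DVR with fraction field $L$, $\cG$ is a reductive $L$-group, and $\sG$ is a geometrically reductive smooth affine $\sO$-model of $\cG$, then $\sG^0$ is a reductive $\sO$-model of $\cG^0$, so $\sG^0(\sO)$ is the parahoric subgroup attached to a unique hyperspecial point $x_\sG \in \sB(\cG^0)$, and moreover $\sG(\sO) = \Stab_{\cG(L)^1}(x_\sG)$. The inclusion $\sG(\sO) \subseteq \Stab_{\cG(L)^1}(x_\sG)$ is immediate, since $\sG(\sO)$ is a bounded subgroup (hence contained in $\cG(L)^1$) which normalizes $\sG^0$, hence $\sG^0(\sO)$, hence fixes $x_\sG$; for the reverse inclusion, an element $g \in \cG(L)^1$ fixing $x_\sG$ normalizes $\sG^0(\sO) = \Stab_{\cG^0(L)}(x_\sG) \cap \cG^0(L)^1$, so conjugation by $g$ extends to an $\sO$-automorphism of $\sG^0$, and one deduces $g \in \sG(\sO)$ by reconstructing $\sG$ from $\sG^0$, its (uniquely determined) finite \'etale component group, and the extension data. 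Conversely, after a finite extension of $\sO$ every hyperspecial point of $\sB(\cG^0)$ arises as some $x_\sG$: by Lemma~\ref{lemma:extending-reductive-groups} a geometrically reductive smooth affine model exists after a finite extension, and the conjugation step below transports it onto any prescribed hyperspecial point.

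The two Bruhat--Tits inputs are: \textbf{(i)} a bounded subgroup of $G(K)$ stabilizes a hyperspecial point of $\sB(G^0_{K'})$ for some finite extension $K'/K$; and \textbf{(ii)} any two hyperspecial points of $\sB(G^0_{K'})$ become $G^0(K')$-conjugate after a further finite extension. For (i): by Lemma~\ref{lemma:parahoric-existence}, $B$ fixes the barycenter $x$ of a facet $F$ of $\sB(G^0)$, which is a rational point of the ambient apartment; after a finite extension we may assume $G^0$ is split, so that hyperspecial points are exactly the special points of apartments and these form a full lattice; passing to a sufficiently ramified extension of $K$, this lattice of hyperspecial points is refined so that it contains $x$, and then $B$ fixes the hyperspecial point $x$ of $\sB(G^0_{K'})$. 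Fact (ii) is classical: the obstruction to conjugacy of two hyperspecial points is a finite invariant killed by a suitable ramified extension. I expect (i) to be the crux, as it is the genuinely new Bruhat--Tits statement --- the disconnected case of the dictionary and fact (ii) also require care, but are closer to the literature.

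Granting these, the first assertion follows. By (i), after a finite local extension $A \subseteq A'$ the group $B$ fixes a hyperspecial point $x \in \sB(G^0_{K'})$. By Lemma~\ref{lemma:extending-reductive-groups} we may enlarge $A'$ so that there is a geometrically reductive smooth affine model $\sG_1$ of $G_{K'}$; by the dictionary $\sG_1(A') = \Stab_{G(K')^1}(x_1)$ for some hyperspecial $x_1$, and by (ii) we may enlarge $A'$ further so that $x = g \cdot x_1$ for some $g \in G^0(K')$. Let $G'$ be the model of $G_{K'}$ obtained from $\sG_1$ by composing its structural isomorphism with $\Ad(g)$; then $G'$ is again geometrically reductive, smooth and affine, and
\[
G'(A') = g\, \sG_1(A')\, g^{-1} = g\, \Stab_{G(K')^1}(x_1)\, g^{-1} = \Stab_{G(K')^1}(x) \supseteq B,
\]
the last containment holding because $B$ fixes $x$, because $B \subseteq G(K')^1$ (as $B$ is bounded), and because $G(K')^1$ is normal in $G(K')$.

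Finally, for the ``moreover'', let $G'$ be any geometrically reductive smooth affine $A'$-model of $G_{K'}$ (the hypothesis $B \subseteq G'(A')$ is not needed). Applying the dictionary over $A'$ to $G'$ and to the base change $G_{A'}$, we obtain hyperspecial points $y, y_0 \in \sB(G^0_{K'})$ with $G'(A') = \Stab_{G(K')^1}(y)$ and $G(A') = \Stab_{G(K')^1}(y_0)$. By (ii), after a further finite extension of $A'$ there is $h \in G^0(K')$ with $y = h \cdot y_0$, and then $G'(A') = h\, G(A')\, h^{-1}$, as required.
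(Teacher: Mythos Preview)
Your approach is essentially the paper's: both arguments use Lemma~\ref{lemma:parahoric-existence} plus a ramified extension to make the barycenter hyperspecial (your fact (i); the paper cites Larsen for this), then the conjugacy of hyperspecial points after extension (your fact (ii); the paper cites \cite[Proposition 10.2.2]{Kaletha-Prasad}), all mediated by a correspondence between geometrically reductive models and hyperspecial points. The only structural difference is that the paper builds $G'$ directly as the Bruhat--Tits group scheme at $x'$ (invoking \cite[Propositions 8.3.1, A.7.1]{Kaletha-Prasad} for the disconnected case), whereas you obtain it by first invoking Lemma~\ref{lemma:extending-reductive-groups} and then conjugating; your route trades one citation for another and is fine.

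There is, however, a genuine gap in your justification of the hard inclusion of the dictionary, namely $\Stab_{\cG(L)^1}(x_\sG) \subseteq \sG(\sO)$. You write that if $g$ fixes $x_\sG$ then $\Ad(g)$ extends to an $\sO$-automorphism of $\sG^0$, and then ``one deduces $g \in \sG(\sO)$ by reconstructing $\sG$ from $\sG^0$, its component group, and the extension data.'' This last sentence is not an argument: knowing that $\Ad(g)$ preserves $\sG^0$ as an $\sO$-scheme says nothing about whether $g$ itself lies in $\sG(\sO)$. The clean fix is to use surjectivity of $\sG(\sO) \to (\sG/\sG^0)(\sO) = (\cG/\cG^0)(L)$ (smooth map, henselian base) to modify $g$ by an element of $\sG(\sO)$ so that $g \in \cG^0(L)$; then $g \in \cG^0(L)^1$ fixes $x_\sG$, hence lies in the hyperspecial parahoric $\sG^0(\sO)$ by standard Bruhat--Tits theory. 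The paper uses exactly this reduction implicitly when it asserts $gG'(A')g^{-1} = G(A')$: both sides intersect $G^0(K')$ in $G^0(A')$ and surject onto the component group, so the inclusion $G(A') \subseteq \Stab(y)$ forces equality.
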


\begin{proof}
    By spreading out, we may and do assume that $A$ is strictly henselian. By Lemma~\ref{lemma:parahoric-existence}, there is a facet $F$ of the Bruhat--Tits building $\sB(G^0)$ such that $B$ stabilizes the barycenter $x$ of $F$. Since $x$ is the barycenter of a facet, the argument of \cite[Lemma 2.4]{Larsen-maximality} shows that there exists a finite extension $K'/K$ with valuation ring $A'$ such that the image $x'$ of $x$ in the building $\sB(G^0_{K'})$ is hyperspecial.\footnote{The proof of \cite[Lemma 2.4]{Larsen-maximality} appears to have a small gap, which can be fixed by noting that one may first pass to a finite extension $K'/K$ such that $x'$ is a vertex. Indeed, if $\sA$ is an apartment containing $x$ and corresponding to a maximal split $K$-torus $T$, then $\sA$ is an affine space under $V(T) \coloneqq \bR \otimes_{\bZ} X_*(T)$, and the metric on $\sB(G^0)$ restricts to the Euclidean metric on $\sA$ (see \cite[Section 4.2]{Kaletha-Prasad}). After choosing an identification of $\sA$ and $V(T)$, one sees that $x$ is a $\bQ$-linear combination of vertices, and the claim follows from the discussion in \cite[Section 6.5]{Kaletha-Prasad}. The phrase ``As $x_1$ is a facet" from \cite{Larsen-maximality} should then be replaced with ``As $x_1$ is a vertex".} Let $U'$ be the stabilizer of $x'$ in $G(K')^1$, so $B \subset U'$. By \cite[Propositions 8.3.1, A.7.1]{Kaletha-Prasad}, there is a smooth affine model $G'$ of $G_{K'}$ over $A'$ such that $G'(A') = U'$.
    
    Since $G^0$ is a reductive group scheme, \cite[Theorem 9.9.3(2)]{Kaletha-Prasad} shows that $G^0(A')$ stabilizes a hyperspecial point $y$ of $\sB(G^0_{K'})$. By \cite[Proposition 1.3.43(3), Corollary 7.4.8]{Kaletha-Prasad}, the point $y$ is the \textit{unique} one stabilized by $G^0(A')$. Moreover, Lemma~\ref{lemma:parahoric-existence} shows that $G(A')$ stabilizes some point of $\sB(G^0)$, so in fact $G(A')$ stabilizes $y$. By \cite[Proposition 10.2.2]{Kaletha-Prasad}, we may pass to a finite extension of $K'$ to find some $g \in G^0(K')$ such that $g\cdot x = y$, and thus $gG'(A')g^{-1} = G(A')$. By Lemma~\ref{lemma:kaletha-prasad}, the $K'$-isomorphism $G_{K'} \to G_{K'}$ given by $g$-conjugation induces an $A'$-isomorphism $G' \to G$. Because $A$ is excellent henselian, the extension $A \subset A'$ is finite.
\end{proof}

\begin{lemma}\label{lemma:griess-ryba}
    Let $A$ be an excellent henselian DVR with fraction field $K$, and let $G$ and $H$ be smooth affine $A$-group schemes such that $G$ is geometrically reductive. If $f_1: H_K \to G_K$ is a $K$-homomorphism, then there exists a finite local extension of DVRs $A \subset A'$ with fraction field $K'$ and $g \in G(K')$ such that $\Ad(g) \circ f_1$ extends to an $A'$-homomorphism $f': H_{A'} \to G_{A'}$.
\end{lemma}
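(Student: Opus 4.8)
The plan is to apply Lemma~\ref{lemma:extending-reductive-groups-2} to the bounded subgroup $B \coloneqq f_1(H(A)) \subset G(K)$, as indicated in the introduction, and then to extend the suitably conjugated morphism via Lemma~\ref{lemma:kaletha-prasad}. As a preliminary reduction I would replace $A$ by its strict henselization $A^{\mathrm{sh}}$: this preserves all the hypotheses (excellence and the noetherian property are preserved, and $G$, $H$ remain smooth affine with $G$ geometrically reductive), a finite local extension of $A^{\mathrm{sh}}$ witnessing the conclusion spreads out to a finite local extension of $A$ witnessing it (exactly as in the opening of the proof of Lemma~\ref{lemma:extending-reductive-groups-2}), and, crucially for the last step, every finite local extension of a strictly henselian DVR is again strictly henselian, the residue-field extension being purely inseparable.

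Next I would check that $B = f_1(H(A))$ is a bounded subgroup of $G(K)$. First, $H(A)$ is bounded in $H(K)$: a closed $A$-immersion $H \hookrightarrow \mathbb{A}^N_A$ exhibits the coordinate functions as taking values in $A$ on $H(A)$, hence as having nonnegative valuation there, so every element of $\Gamma(H_K, \sO_{H_K}) = \Gamma(H, \sO_H) \otimes_A K$ has valuation bounded below on $H(A)$. Since $f_1$ is a morphism of affine $K$-schemes, for $\phi \in \Gamma(G_K, \sO_{G_K})$ one has $v(\phi(f_1(b))) = v((\phi \circ f_1)(b))$ with $\phi \circ f_1 \in \Gamma(H_K, \sO_{H_K})$, so this is bounded below on $H(A)$; and $B$ is a subgroup because $f_1$ is a group homomorphism. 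Applying Lemma~\ref{lemma:extending-reductive-groups-2} to $G$ and $B$ produces, after a finite local extension $A \subset A'$ with fraction field $K'$, a geometrically reductive smooth affine $A'$-model $G'$ of $G_{K'}$ with $B \subset G'(A')$; the ``moreover'' clause, after one more finite local extension that I rename $A'$, supplies $g \in G^0(K')$ with $g\, G'(A')\, g^{-1} = G(A')$. Hence
\[
(\Ad(g) \circ f_1)(H(A)) = g\, f_1(H(A))\, g^{-1} = g\, B\, g^{-1} \subset g\, G'(A')\, g^{-1} = G(A').
\]

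Finally I would apply Lemma~\ref{lemma:kaletha-prasad} with $X = H$ (smooth over $A$), $X' = G_{A'}$ (affine over $A'$), and the $K'$-morphism $f = \Ad(g) \circ (f_1)_{K'} \colon H_{K'} \to G_{K'}$: the display says precisely that $f(H(A)) \subset G(A')$, so $f$ extends uniquely to an $A'$-morphism $f' \colon H_{A'} \to G_{A'}$. This $f'$ is automatically a group homomorphism: since $H_{A'}$ is $A'$-flat its generic fibre is schematically dense, and $f'$ is a homomorphism there, so the two $A'$-morphisms $H_{A'} \times_{A'} H_{A'} \to G_{A'}$ comparing the two group laws agree on the generic fibre and hence everywhere. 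Thus $f'$ is the required homomorphism. Given how much is already packaged into Lemmas~\ref{lemma:extending-reductive-groups-2} and \ref{lemma:kaletha-prasad}, I expect the only real friction to be the reduction to the strictly henselian case together with the bookkeeping around the successive finite extensions; everything else is formal.
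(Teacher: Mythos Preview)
Your proof is correct and follows essentially the same route as the paper: reduce to the strictly henselian case, apply Lemma~\ref{lemma:extending-reductive-groups-2} to the bounded subgroup $f_1(H(A))$ to obtain $G'$ and the conjugating element $g$, and then extend $\Ad(g)\circ f_1$ via Lemma~\ref{lemma:kaletha-prasad}. Your version is simply more explicit about a few details (the boundedness of $f_1(H(A))$, the spreading-out step using that $A$ is henselian so the intermediate \'etale local algebras are finite, and the verification that the extended morphism is a homomorphism), but the argument is the same.
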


\begin{proof}
    By spreading out, we may and do assume that $A$ is strictly henselian with algebraically closed residue field. Note that $f_1(H(A)) \subset G(K)$ is a bounded subgroup, so Lemma~\ref{lemma:extending-reductive-groups-2} shows that there is a finite local extension $A \subset A'$, a geometrically reductive smooth affine $A'$-group scheme $G'$ such that $f_1(H(A)) \subset G'(A')$, and $g \in G^0(K')$ such that $gG'(A')g^{-1} = G(A')$. By Lemma~\ref{lemma:kaletha-prasad}, it follows that $\Ad(g) \circ (f_1)_{K'}: H_{K'} \to G'_{K'}$ extends to an $A'$-homomorphism $f': H_{A'} \to G'$, as desired.
\end{proof}

We need one more technical lemma before proving Theorem~\ref{theorem:main-theorem}.

\begin{lemma}\label{lemma:technical}
    Let $K$ be a discretely valued field, let $G$ be a reductive $K$-group, let $\Gamma$ be a finitely generated group, and let $\vp, \psi: \Gamma \to G(K)$ be two homomorphisms whose $G$-orbits have intersecting closures in $\underline{\Hom}_{K\textrm{-gp}}(\Gamma, G)$. Suppose moreover that $\vp(\Gamma)$ is bounded in $G(K)$ and that the Zariski closure $G_1 = \overline{\psi(\Gamma)}$ is reductive. Then $\psi(\Gamma)$ is bounded in $G(K)$.
\end{lemma}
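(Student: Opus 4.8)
The plan is to exhibit $\psi$ as a ``cocycle twist'' of a degeneration of $\vp$, and to exploit that degenerations can only shrink matrix coefficients while $\vp(\Gamma)$ is bounded by hypothesis.

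Fix generators $\gamma_1,\dots,\gamma_N$ of $\Gamma$, so that $\underline{\Hom}_{K\textrm{-gp}}(\Gamma,G)$ is a closed, hence affine, subscheme of $G^N$ with $G$ acting by simultaneous conjugation. Boundedness of a subgroup of $G(K)$, reductivity of $G_1$, and the property that $\vp$ and $\psi$ have the same image in the GIT quotient $\underline{\Hom}_{K\textrm{-gp}}(\Gamma,G)/\!/G$ (equivalent to their orbit closures intersecting) are all unaffected by replacing $K$ by its completion and then by a finite extension, which I would do freely. First I would base change to $\bar K$ and apply the Hilbert--Mumford/Kempf theorem: the orbit closures $\overline{G\cdot\vp}$ and $\overline{G\cdot\psi}$ in $\underline{\Hom}_{K\textrm{-gp}}(\Gamma,G)_{\bar K}$ each contain a unique closed orbit, and these closed orbits coincide, since the intersection of the two closures is nonempty, closed and $G$-stable. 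Hence there are cocharacters $\la,\mu$ of $G_{\bar K}$ such that $\chi\coloneqq\lim_{t\to 0}\la(t)\vp\la(t)^{-1}$ and $\chi'\coloneqq\lim_{t\to0}\mu(t)\psi\mu(t)^{-1}$ both exist and lie in this common closed orbit; in particular $\chi'=g\chi g^{-1}$ for some $g\in G(\bar K)$. After a finite extension of $K$ I may assume $\la$, $\mu$ and the morphisms $\chi,\chi'$ are defined over $K$.

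Next I would show $\chi'(\Gamma)$ is bounded. Choose a faithful representation $G\hookrightarrow\GL_n$ over $K$ and decompose $\bar K^n$ into weight spaces for $\la$. For the limit defining $\chi$ to exist, each $\vp(\gamma)$ must be block upper-triangular for this grading, and $\chi(\gamma)$ is precisely its block-diagonal part; thus every matrix coefficient of $\chi(\gamma)$ occurs as a matrix coefficient of $\vp(\gamma)$, and $\det\chi(\gamma)=\det\vp(\gamma)$. Since $\vp(\Gamma)$ is bounded, it follows that $\chi(\Gamma)$ is bounded, and hence so is its conjugate $\chi'(\Gamma)$ (boundedness is conjugation-invariant and insensitive to the field extension).

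Finally I would compare $\psi$ with $\chi'$. Because $\lim_{t\to0}\mu(t)\psi(\gamma_i)\mu(t)^{-1}$ exists for each generator, hence $\lim_{t\to0}\mu(t)\psi(\gamma)\mu(t)^{-1}$ exists for every $\gamma$, we have $\psi(\Gamma)\subseteq P(\mu)(K)$, where $P(\mu)$ is the parabolic subgroup of $G$ attached to $\mu$; moreover the canonical retraction $P(\mu)\to L(\mu)$ onto the Levi (namely $h\mapsto\lim_{t\to0}\mu(t)h\mu(t)^{-1}$) sends $\psi(\gamma)$ to $\chi'(\gamma)$. Thus $\psi(\gamma)=\chi'(\gamma)u_\gamma$ with $u_\gamma\coloneqq\chi'(\gamma)^{-1}\psi(\gamma)\in\mathrm{R}_u(P(\mu))(K)$, and $\gamma\mapsto u_\gamma$ is a non-abelian $1$-cocycle for the action of $\Gamma$ on $\mathrm{R}_u(P(\mu))$ through $\chi'$. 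Now $\chi'(\Gamma)$ is a bounded subgroup of the (possibly disconnected) reductive group $L(\mu)(K)$, so by Lemma~\ref{lemma:parahoric-existence} it stabilizes the barycenter of a facet of $\sB(L(\mu)^0)$, and combined with standard Bruhat--Tits theory (its image in the maximal central split torus of $L(\mu)^0$ being bounded as well) it is contained in a bounded subgroup $\cP\subseteq L(\mu)(K)$. Expanding the cocycle relation along a word in $\gamma_1^{\pm1},\dots,\gamma_N^{\pm1}$ expresses $u_\gamma$ as a product of $\cP$-conjugates of the finitely many $u_{\gamma_i^{\pm1}}$; these conjugates form a bounded subset $S$ of $\mathrm{R}_u(P(\mu))(K)$, and a bounded subset of a split unipotent group over a non-archimedean field lies in a bounded subgroup (choose an exhaustion of $\mathrm{R}_u(P(\mu))(K)$ by bounded subgroups, weighting coordinates by lower central series degree). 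Hence $\{u_\gamma\}$ is bounded, so $\psi(\Gamma)\subseteq\cP\cdot\langle S\rangle$ is bounded in $P(\mu)(K)\subseteq G(K)$, as desired.

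The step I expect to be the main obstacle is the last paragraph: recognizing $\psi$ as the cocycle twist $\chi'(\gamma)u_\gamma$ of the manifestly bounded degeneration $\chi'$, and then observing that the unipotent-valued discrepancy $u_\gamma$ is automatically bounded because bounded sets generate bounded subgroups in split unipotent groups over non-archimedean fields. A secondary technical point is the clean extraction of the degeneration cocharacters $\la,\mu$ via Hilbert--Mumford together with the descent to a finite extension.
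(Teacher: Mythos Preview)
Your argument is correct, but it is a genuinely different (and considerably longer) route than the paper's. The paper observes that the characteristic polynomial of $\rho(\psi(\gamma))$ in any faithful representation $\rho$ is a $G$-invariant function of $\psi$, hence agrees with that of $\rho(\vp(\gamma))$; boundedness of $\vp(\Gamma)$ then forces all eigenvalues of each $\rho(\psi(\gamma))$ to be integral, and one concludes by the criterion \cite[Lemma~2.2.11]{Kaletha-Prasad}, which says that a subgroup of a \emph{connected reductive} group all of whose elements have integral eigenvalues is bounded. This is where the hypothesis that $G_1$ is reductive enters (after first shrinking $\Gamma$ to make $G_1$ connected). Your approach via Hilbert--Mumford and the cocycle decomposition $\psi(\gamma)=\chi'(\gamma)u_\gamma$ with $u_\gamma\in R_u(P(\mu))(K)$ is sound: the key point, that a bounded subset of $R_u(P(\mu))(K)$ lies in a bounded subgroup, follows exactly as you indicate by exhausting $R_u(P(\mu))(K)$ by the conjugates $\mu(\pi)^n\,\cU(A)\,\mu(\pi)^{-n}$ after splitting $G$. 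One dividend of your argument is that it never uses the reductivity of $G_1$, so it actually proves a slightly stronger statement than the lemma. A small simplification: the appeal to Lemma~\ref{lemma:parahoric-existence} is unnecessary, since you only need $\cP$ bounded to make $S$ bounded, and you may simply take $\cP=\chi'(\Gamma)$.
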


\begin{proof}
    Note first that if $\Gamma' \subset \Gamma$ is a finite index subgroup, then $\psi(\Gamma)$ is bounded if and only $\psi(\Gamma')$ is bounded; thus we may shrink $\Gamma$ to assume that $G_1$ is connected. Let $\rho: G \to \GL(V)$ be a faithful $K$-representation, and note that boundedness of $\vp(\Gamma)$ implies that for each $\gamma \in \Gamma$, all of the eigenvalues of $\rho(\vp(\gamma))$ on $V_{\overline{K}}$ are integral. Because the $G$-orbits of $\vp$ and $\psi$ have intersecting closures, the eigenvalues of $\rho(\psi(\gamma))$ on $V_{\overline{K}}$ are the same as those of $\rho(\vp(\gamma))$, and in particular they are all integral. Thus by \cite[Lemma 2.2.11]{Kaletha-Prasad} (applied to the connected reductive group $G_1$), we find that $\psi(\Gamma)$ is bounded, as desired.
\end{proof}

\section{Proof of Theorem~\ref{theorem:main-theorem}}\label{section:main-proof}

As in the introduction, let $S$ be a locally noetherian scheme, and let $G$ and $H$ be geometrically reductive smooth affine $S$-group schemes. Fix a finite $S$-homomorphism $f: H \to G$ and a homomorphism $h: \Gamma' \to \Gamma$ of finitely generated groups such that $h(\Gamma')$ is of finite index in $\Gamma$. For simplicity, write $\bH_H = \underline{\Hom}_{S\textrm{-gp}}(\Gamma, H)$ and $\bH'_G = \underline{\Hom}_{S\textrm{-gp}}(\Gamma', G)$. We will show in this section that the $S$-morphism
\[
    F: \bH_H/\!/H \to \bH'_G/\!/G
\]
is finite. \smallskip

The map $F$ is evidently affine, and it is of finite type by \cite[Theorem 6.3.3]{Alper-adequate}, so to show that $F$ is finite it suffices (by \cite[\href{https://stacks.math.columbia.edu/tag/01WM}{Tag 01WM}]{stacks-project}) to show that it is universally closed, or equivalently to verify the existence part of the valuative criterion of properness for $F$. In other words, we must show that if $A$ is a DVR with fraction field $K$ and we have a solid diagram
\[
\begin{tikzcd}
    \Spec K' \arrow[r, dotted] \arrow[d, dotted]
    &\Spec K \arrow[r, "y"] \arrow[d]
    &\bH_H/\!/H \arrow[d, "F"] \\
    \Spec A' \arrow[r, dotted] \arrow[rru, dotted]
    &\Spec A \arrow[r, "x"]
    &\bH'_G/\!/G
\end{tikzcd}
\]
then there is a local extension $A \subset A'$ of DVRs inducing the extension $K \subset K'$ of fraction fields such that the above dashed diagram can be filled in to become commutative. We will extend $A$ in stages; note that at every stage we are free to replace $A$ by its completion to assume that $A$ is excellent and henselian.\smallskip

From now on, fix a solid diagram as above. By \cite[Theorem 5.3.1(5)]{Alper-adequate}, every element of $(\bH_H/\!/H)(\overline{K})$ lifts to a homomorphism $\psi: \Gamma \to H(\overline{K})$ with closed $H_{\overline{K}}$-orbit in $\bH_{H_{\overline{K}}}$. Since $\Gamma$ is finitely generated, we may pass to a local extension of $A$ to assume that $y$ lifts to such a homomorphism $\psi$ valued in $H(K)$ with closed $H$-orbit in $\bH_H$. Concretely, we must show that there is a local extension $A \subset A'$ of DVRs with fraction field $K'$ such that $\psi$ is $H(K')$-conjugate to a homomorphism $\Gamma \to H(A')$, or equivalently that $\psi(\Gamma)$ is $H(K')$-conjugate to a subgroup of $H(A')$. Let $H_1$ be the closed $K$-subgroup of $H_K$ which is topologically generated by $\psi(\Gamma)$. A surjection $F_N \to \Gamma$ from a free group $F_N$ induces an $H$-equivariant closed embedding $\bH_H \to H^N$. In particular, the image of $\psi$ under such an embedding still has closed $H$-orbit, so \cite[Lemma 16.3, Theorem 16.4]{Richardson} shows that $H_1$ is reductive. (Strictly speaking, this reference requires $H$ to be connected, but its proof works in the non-connected case because of the robustness of the results of \cite{Borel-Tits} on which it relies.) \smallskip

By Lemma~\ref{lemma:git-quotient-points}, we may extend $A$ to assume that $x$ is the image of a homomorphism $\vp: \Gamma' \to G(A)$ under the natural map $\bH'_G(A) \to (\bH'_G/\!/G)(A)$. Because $f \circ \psi \circ h$ and $\vp$ have the same image in $(\bH'_G/\!/G)(K)$, \cite[Theorem 5.3.1(5)]{Alper-adequate} shows that the closures of their orbits intersect. Moreover, if $G_1$ is the Zariski closure of $f(\psi(h(\Gamma')))$, then there is a finite surjective map $H_1^0 \to G_1^0$, so $G_1$ is reductive because $H_1$ is reductive. By Lemma~\ref{lemma:technical}, it follows that $f(\psi(h(\Gamma')))$ is bounded. Since $f(\psi(\Gamma))$ contains $f(\psi(h(\Gamma')))$ as a finite index subgroup, it follows that $f(\psi(\Gamma))$ is bounded in $G(K)$. By \cite[Lemma 2.2.10]{Kaletha-Prasad}, the fact that $f$ is finite implies that $\psi(\Gamma)$ is bounded in $H(K)$.\smallskip

By the previous paragraph, Lemma~\ref{lemma:extending-reductive-groups}, and Lemma~\ref{lemma:extending-reductive-groups-2}, after extending $A$ further we may and do assume that there is a geometrically reductive smooth affine $A$-group scheme $\sH_1$ with generic fiber $H_1$ such that $\psi(\Gamma) \subset \sH_1(A)$. By construction, there is a $K$-morphism $i_1: H_1 \to H_K$, and by Lemma~\ref{lemma:griess-ryba} we may pass to a local extension of $A$ and conjugate by an element of $H(K)$ to assume that $i_1$ extends to an $A$-homomorphism $i: \sH_1 \to H$. Thus $\psi(\Gamma)$ is $H(K)$-conjugate to a subgroup of $H(A)$, verifying the existence part of the valuative criterion of properness and proving that $F$ is finite, as desired.

\begin{remark}
    If $S$ is not required to be locally noetherian, then it follows from Theorem~\ref{theorem:main-theorem}, \cite[Proposition 5.2.9]{Alper-adequate}, and spreading out that $F$ is integral. However, I do not know whether the schemes involved are finitely presented (or even of finite type) in this case, so \textit{finiteness} of the morphism is not clear.
\end{remark}

\bibliography{bibliography}

\end{document}